\def\groi{{G}^{\!\!\!\raise3pt\hbox{$\scriptscriptstyle \circ$}}}
\newcounter{axioms}
\newenvironment{axioms}[1]
	{\renewcommand{\theaxioms}{{\upshape\sffamily(#1\arabic{axioms})}}
	\begin{list}{\theaxioms}
		{\usecounter{axioms}
		\settowidth{\labelwidth}{\theaxioms{} }
		\leftmargin=\labelwidth
		\advance \leftmargin\labelsep
		}
	}
	{\end{list}}
\newenvironment{axiom}[1]
	{\renewcommand{\theaxioms}{{\upshape\sffamily(#1)}}
	\begin{list}{\theaxioms}
		{\usecounter{axioms}
		\settowidth{\labelwidth}{\theaxioms{} }
		\leftmargin=\labelwidth
		\advance \leftmargin\labelsep
		}
	}
	{\end{list}}
\newtheorem*{question}{Question}
\renewcommand{\thefootnote}{}
\begin{document}

\selectlanguage{english}

\begin{center}
{\Large \bfseries \scshape A linear version of Dawson-Gärtner theorem \\
Applications to Cramér's theory}\footnote{\hspace{-6mm}MSC2010 subject classifications: 60F10; 28E15.\\
Key words and phrases: Dawson-Gärtner theorem, projective limits, Cramér's theory, large deviations, Fenchel-Legendre transformation, measurable vector spaces, measurable cardinals.}

\vspace{5mm}

\textsc{By Pierre Petit\footnote{\hspace{-6mm}E-mail: \texttt{pierre.petit@normalesup.org}}}

\vspace{5mm}

\textit{Institut de Mathématiques de Toulouse\footnote{\hspace{-6mm}Address: UPS IMT, F-31062 Toulouse Cedex 9, France}}

\vspace{5mm}

\today
\end{center}

\vspace{5mm}

\setcounter{footnote}{0}
\renewcommand{\thefootnote}{\arabic{footnote}}

\begin{abstract}
We prove a linear version of Dawson-Gärtner theorem: weak large deviation principles and the equality $-s = p^*$ between the negentropy and the Fenchel-Legendre transform of the pressure are preserved through linear projective limits. As a result, the equality $-s = p^*$ holds in great generality for empirical means of independent and identically distributed random variables (Cramér's theory), e.g. in any measurable normed space, and even in any projective limit of such spaces. Eventually, we give an original example where $-s \neq p^*$ and discuss the dual equality.
\end{abstract}

\tableofcontents

\section{Introduction}

In 1987, Dawson and Gärtner (see \cite{DaG87}) formalized the fact that large deviation principles with good rate functions well behave with respect to projective limits. Originally designed to establish large deviation principles in weak topologies, especially in Sanov-type theorems, the result allows to extend large deviation principles to rather general contexts (see\emph{, e.g.}, \cite{DeS89}, \cite{dAc94a}, \cite{dAc94b}, \cite{DAc97}, \cite{DeZ93s}).

In the spirit of Sanov's theorem (see \cite{San57}) and Varadhan's theory of large deviations (see \cite{Var66}), via Freidlin and Wentzell's theory (see \cite{FrW84}), Dawson-Gärtner theorem relies on a compacity assumption: the goodness of the rate function. In order to handle generalizations of Cramér's theorem (see\emph{, e.g.}, \cite{Cra38}, \cite{BaZ79}, \cite{DeS89}, \cite{DeZ93s}, \cite{Cer07}), we weaken the role of compacity and exploit the role of linearity when dealing with empirical means of random variables. We prove that weak large deviation principles and the equality $-s = p^*$ between the negentropy and the Fenchel-Legendre transform of the pressure well behave with respect to linear projective limits (see \Cref{ldg}). We also prove a linear version of the inverse contraction principle (see \Cref{ii}).

In \Cref{sec:applic_cramer}, we apply the result to Cramér's theory. In \Cref{sec:cramerlcms}, we obtain a general framework in which the entropy $s$ and the pressure $p$ of a sequence of empirical means of independent and identically distributed random variables are well-defined. It can be seen as a constructive general model of Cramér's theory, closely related to the nonconstructive frameworks of \cite{BaZ79} and \cite{Cer07}, and generalizing both the setting of separable Banach spaces with their Borel $\sigma$-algebra and the setting of weak topologies with cylinder $\sigma$-algebra. It appears that \Cref{ii} and \Cref{ldg} imply the equality $-s = p^*$ in this general framework (see \Cref{cramerlcm}). In particular, the equality $-s = p^*$ is true in any separable (not necessarily complete) normed vector space with its Borel $\sigma$-algebra (see \Cref{cramerevns}), and even in any normed vector space with its Borel $\sigma$-algebra as soon as the question makes sense (see \Cref{cramerevnm}).

This analysis was motivated by a question of Rapha\"el Cerf: does there exist a sequence of empirical means of independent and identically distributed random variables for which $-s \neq p^*$? We provide such an example, on the borders of Cramér's theory, in \Cref{sec:counterexample1}. And we discuss the dual equality $p = (-s)^*$ in \Cref{sec:counterexample2}.

\section{Setting and main results}

\subsection{Entropy and pressure}

As in \cite{LPS95}, we want that the entropy of any sequence of random variables be well-defined. We add an extra hypothesis so that the same holds for the pressure. Let $(\mathcal{X}, \tau)$ be a (not necessarily Hausdorff) topological vector space and let $\mathcal{F}$ be a (not necessarily Borel) $\sigma$-algebra over $\mathcal{X}$ such that:
\begin{axioms}{LM}
\item \label{lm1} every point $x$ in $\mathcal{X}$ admits a local basis $\mathcal{V}_x$ made up of measurable neighborhoods;
\item \label{lm2} every continuous linear functional over $\mathcal{X}$ is a measurable map.
\end{axioms}
We will say that $(\mathcal{X}, \tau, \mathcal{F})$ is a \emph{locally measurable vector space} (\emph{l.m.v.s.}). Throughout the text, $\mathcal{X}^*$ denotes the topological dual of $\mathcal{X}$, i.e.\ the vector space of continuous linear functional over $\mathcal{X}$. Let $(Z_n)_{n \geqslant 1}$ be a sequence of $(\mathcal{X}, \mathcal{F})$-valued random variables and $(v_n)_{n \geqslant 1}$ a sequence of positive numbers diverging to $+\infty$. Let us define three important functions in the theory of large deviations (further details can be found in \cite{BaZ79}, \cite{LPS95}, and \cite{LeP95}). The \emph{entropy} of the sequence $(Z_n, v_n)_{n \geqslant 1}$ is the function $s : \mathcal{X} \rightarrow [-\infty , 0]$ defined by
\[
\forall x \in \mathcal{X} \quad s(x) := \inf_{V \in \mathcal{V}_x} \liminf_{n \to \infty} \frac{1}{v_n} \log \Prob(Z_n \in V),
\]
the \emph{entropy superior} of the sequence $(Z_n, v_n)_{n \geqslant 1}$ is the function $\overline{s} : \mathcal{X} \rightarrow [-\infty , 0]$ defined by
\[
\forall x \in \mathcal{X} \quad \overline{s}(x) := \inf_{V \in \mathcal{V}_x} \limsup_{n \to \infty} \frac{1}{v_n} \log \Prob(Z_n \in V),
\]
and the \emph{pressure} of the sequence $(Z_n, v_n)_{n \geqslant 1}$ is the function $p : \mathcal{X}^* \rightarrow [-\infty , +\infty]$ defined by
\[
\forall \lambda \in \mathcal{X}^* \quad p(\lambda) := \limsup_{n \to \infty} \frac{1}{v_n} \log \mathbb{E}\big( e^{v_n \lambda(Z_n)} \big).
\]
Note that the functions $s$ and $\overline{s}$ do not depend on the local bases $\mathcal{V}_x$ chosen in each point $x \in \mathcal{X}$ (hypothesis \ref{lm1}) and that $p$ is well defined (hypothesis \ref{lm2}). Remember that the \emph{Fenchel-Legendre transform} of a function $h : \mathcal{X}^* \to [-\infty , +\infty]$ is the function $h^* : \mathcal{X} \to [-\infty , +\infty]$
\[
\forall x \in \mathcal{X} \quad h^*(x) = \sup_{\lambda \in \mathcal{X}^*} \big( \lambda(x) - h(\lambda) \big) .
\]
(see \cite{Mor67} for a detailed study of the notion). It is always true that
\begin{equation}\label{ineq_ssp}
s \leqslant \overline{s} \leqslant -p^*,
\end{equation}
the second inequality being a consequence of Chebyshev's inequality. It appears that equalities in \eqref{ineq_ssp} are of greatest interest in large deviation theory. Indeed, by definition, the entropy $s$ is the greatest function that satisfies the lower bound:
\begin{axiom}{LB}
\item \label{lb} for all subset $G \in \mathcal{F}$, $\displaystyle{\liminf_{n \to \infty} \frac{1}{v_n} \log \Prob\big( Z_n \in G \big) \geqslant \sup_{x \in \groi} s(x)}$.
\end{axiom}
And the entropy superior $\overline{s}$ satisfies the compact upper bound:
\begin{axiom}{UB$_\textsf{k}$}
\item \label{ubk} for all relatively compact subset $K \in \mathcal{F}$, $\displaystyle{\limsup_{n \to \infty} \frac{1}{v_n} \log \Prob\big( Z_n \in K \big) \leqslant \sup_{x \in \overline{K}} \overline{s}(x)}$.
\end{axiom}
Recall that a set is compact if each of its open covers has a finite subcover (no separation axiom is required). And we say that a subset is relatively compact if its closure is compact. The proof of \ref{ubk} can be adapted from the one of \cite[Lemma 2.5]{LeP95} as follows: if $\sup\ensavec{\overline{s}(x)}{x \in \overline{K}} < M$, then, forall $x \in \overline{K}$, there exists $V_x \in \mathcal{V}_x$ such that $\limsup v_n^{-1} \log \Prob(x \in V_x) < M$; extracting a finite subcover of $\overline{K}$ by the $V_x$ and using the principle of the largest term (\cite[Lemma 2.3]{LeP95}), one gets the bound. Therefore, a natural sufficient condition so that $(Z_n, v_n)_{n \geqslant 1}$ satisfies a weak large deviation principle (i.e.\ \ref{lb} + \ref{ubk}) is that $s = \overline{s}$. Furthermore, the function $p^*$ is often easier to compute than $s$, which makes relevant the study of the equality $s = -p^*$.

The purpose of the text is to show that both l.m.v.s.\ and equalities in (\ref{ineq_ssp}) well behave with respect to inverse images and projective limits, in the spirit of Dawson-Gärtner theorem (see\emph{, e.g.}, \cite[Theorem 4.6.1]{DeZ93s}). Then we apply the result to the case of empirical means of independent and identically distributed random variables (Cramér's theory) and obtain a general weak large deviation principle with entropy $s=-p^*$.

\subsection{Inverse image of a locally measurable topological vector space}

Let $\mathcal{X}$ be a vector space, let $(\mathcal{X}_1, \tau_1, \mathcal{F}_1)$ be a l.m.v.s., and let $f_1 \colon \mathcal{X} \to \mathcal{X}_1$ be a linear map. Let $\tau$ (resp. $\mathcal{F}$) be the initial topology (resp. the initial $\sigma$-algebra) on $\mathcal{X}$ with repect to the linear map $f_1$, also called \emph{inverse image topology under $f_1$} (resp. \emph{$\sigma$-algebra generated by $f_1$}). It appears to be that $(\mathcal{X}, \tau, \mathcal{F})$ is a l.m.v.s.\ (see Propositions \ref{i_vois} and \ref{i_dual} below). We will say that $(\mathcal{X}, \tau, \mathcal{F})$ is the \emph{inverse image of the l.m.v.s.\ $(\mathcal{X}_1, \tau_1, \mathcal{F}_1)$ under $f_1$}.

\begin{theorem}[Linear inverse contraction principle] \label{ii}
Let $\mathcal{X}$ be a vector space, let $(\mathcal{X}_1, \tau_1, \mathcal{F}_1)$ be a l.m.v.s., and let $f_1 \colon \mathcal{X} \to \mathcal{X}_1$ be a linear map. Let $(\mathcal{X}, \tau, \mathcal{F})$ be the inverse image of the l.m.v.s.\ $(\mathcal{X}_1, \tau_1, \mathcal{F}_1)$ under $f_1$. Let $(Z_n)_{n \geqslant 1}$ be a sequence of $(\mathcal{X}, \mathcal{F})$-valued random variables and $(v_n)_{n \geqslant 1}$ a sequence of positive numbers diverging to $+\infty$. Define $s$, $\overline{s}$, and $p$ (resp.\ $s_1$, $\overline{s}_1$, and $p_1$) the entropy, the entropy superior, and the pressure of the sequence $(Z_n, v_n)_{n \geqslant 1}$ (resp.\ of the sequence $(f_1(Z_n), v_n)_{n \geqslant 1}$). Then,
\begin{equation}\label{ii_ssp}
s = s_1 \circ f_1, \quad \overline{s} = \overline{s}_1 \circ f_1, \quad\text{and} \quad p^* = p_1^* \circ f_1.
\end{equation}
\begin{enumerate}
\item \label{ii_ent} If $s_1 = \overline{s}_1$, then $s = \overline{s}$; in particular, $(Z_n, v_n)_{n \geqslant 1}$ satisfies a weak large deviation principle.
\item \label{ii_equiv} If $\overline{s}_1 = -p_1^*$, then $\overline{s} = -p^*$.
\item \label{ii_pgd} If $(f_1(Z_n), v_n)_{n \geqslant 1}$ satisfies a weak large deviation principle and each compact subset of $\mathcal{X}_1$ is measurable, then $(Z_n, v_n)_{n \geqslant 1}$ satisfies a weak large deviation principle.
\end{enumerate}
\end{theorem}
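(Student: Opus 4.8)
The plan is to establish \eqref{ii_ssp} first and then deduce the three numbered items from it. For the entropies, the key is that $\tau$ is the initial topology under $f_1$: at each $x\in\mathcal X$ the sets $f_1^{-1}(V)$, where $V$ runs over the neighbourhoods of $f_1(x)$ in $\mathcal X_1$, form a local basis, and by \ref{lm1} applied in $\mathcal X_1$ one may take those $V$ measurable, so that $f_1^{-1}(V)\in\mathcal F=f_1^{-1}(\mathcal F_1)$. Since $s$ and $\overline s$ are independent of the chosen local bases and $\{Z_n\in f_1^{-1}(V)\}=\{f_1(Z_n)\in V\}$, comparing the defining formulae gives $s=s_1\circ f_1$ and $\overline s=\overline s_1\circ f_1$. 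For the pressure, Proposition~\ref{i_dual} identifies the continuous linear functionals on $\mathcal X$ with the maps $\lambda_1\circ f_1$, $\lambda_1\in\mathcal X_1^*$; since $\lambda_1(f_1(Z_n))=(\lambda_1\circ f_1)(Z_n)$ one has $p(\lambda_1\circ f_1)=p_1(\lambda_1)$, and substituting $\lambda=\lambda_1\circ f_1$ in the supremum defining the Fenchel--Legendre transform,
\[
p^*(x)=\sup_{\lambda\in\mathcal X^*}\big(\lambda(x)-p(\lambda)\big)=\sup_{\lambda_1\in\mathcal X_1^*}\big(\lambda_1(f_1(x))-p_1(\lambda_1)\big)=p_1^*(f_1(x)),
\]
which is the third equality of \eqref{ii_ssp}.

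Items \ref{ii_ent} and \ref{ii_equiv} then follow by composing the hypothesised equality with $f_1$: if $s_1=\overline s_1$ then $s=s_1\circ f_1=\overline s_1\circ f_1=\overline s$, and $s=\overline s$ already yields a weak large deviation principle; if $\overline s_1=-p_1^*$ then $\overline s=\overline s_1\circ f_1=-(p_1^*\circ f_1)=-p^*$.

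For item \ref{ii_pgd}: the weak large deviation principle assumed for $(f_1(Z_n),v_n)$ provides a rate function $I_1$, whose negative satisfies the lower bound \ref{lb}, so by maximality of $s_1$ one has $-I_1\leqslant s_1$, and the compact upper bound gives, for every measurable relatively compact $K_1\subseteq\mathcal X_1$,
\[
\limsup_n\frac{1}{v_n}\log\Prob(f_1(Z_n)\in K_1)\leqslant\sup_{\overline{K_1}}(-I_1)\leqslant\sup_{\overline{K_1}}s_1 .
\]
Let $K\in\mathcal F$ be relatively compact in $\mathcal X$. Then $\overline K$ is compact, so $f_1(\overline K)$ is a compact subset of $\mathcal X_1$; by hypothesis it is measurable, and it is relatively compact because the closure of a compact subset of a topological vector space is compact — that closure equals $f_1(\overline K)+\overline{\{0\}}$, a continuous image of $f_1(\overline K)\times\overline{\{0\}}$, the second factor being compact since its subspace topology is indiscrete. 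From $\{Z_n\in K\}\subseteq\{f_1(Z_n)\in f_1(\overline K)\}$ and the bound above applied to $K_1=f_1(\overline K)$, one gets $\limsup_n v_n^{-1}\log\Prob(Z_n\in K)\leqslant\sup_{\overline{f_1(\overline K)}}s_1$. Finally $s_1$ is upper semicontinuous — if $s_1(y)<\alpha$, picking a measurable neighbourhood $V$ of $y$ with $\liminf_n v_n^{-1}\log\Prob(f_1(Z_n)\in V)<\alpha$ shows $s_1<\alpha$ on the open set $\interieur V$ — hence $s_1$ is constant on every coset of $\overline{\{0\}}$ (such a coset being indiscrete), and therefore $\sup_{\overline{f_1(\overline K)}}s_1=\sup_{f_1(\overline K)+\overline{\{0\}}}s_1=\sup_{f_1(\overline K)}s_1=\sup_{x\in\overline K}s_1(f_1(x))=\sup_{\overline K}s$. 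This is precisely the compact upper bound for $(Z_n,v_n)$ with entropy $s$; with the always-valid lower bound \ref{lb} it constitutes a weak large deviation principle.

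I expect item \ref{ii_pgd} to be the only real difficulty: one must check that $f_1(\overline K)$ is a legitimate (measurable, relatively compact) test set in $\mathcal X_1$ and that passing to its closure does not raise the relevant supremum, which forces one to keep track of the non-Hausdorff datum $\overline{\{0\}}$ invisible in the classical framework. The other essential use of linearity, rather than mere continuity, is the factorisation $\mathcal X^*=\mathcal X_1^*\circ f_1$ underlying the $p^*$-part of \eqref{ii_ssp}.
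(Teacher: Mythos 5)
Your argument follows the paper's proof essentially step for step: the local-basis and dual identifications (Propositions \ref{i_vois} and \ref{i_dual}) yield \eqref{ii_ssp} by direct substitution, items 1 and 2 follow by composition, and item 3 tests the upper bound on the compact measurable set $f_1(\overline{K})$ exactly as the paper does (the min-max step the paper needs for the projective-limit version degenerates when $I=\{1\}$). Your extra care in comparing $\sup$ over $\overline{f_1(\overline{K})}$ with $\sup$ over $f_1(\overline{K})$ via the coset $\overline{\{0\}}$ is sound and in fact tidies up a point the paper passes over silently.
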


\subsection{Projective limits of locally measurable topological vector spaces}

Let $(I, \leqslant)$ be a directed preordered set. For all $i \in I$, let $(\mathcal{X}_i, \tau_i, \mathcal{F}_i)$ be a l.m.v.s.\ and, for all $(i, j) \in I^2$ with $i \leqslant j$, let $f_{i,j}$ be a map from $\mathcal{X}_j$ to $\mathcal{X}_i$. Suppose that
\begin{axioms}{PL}
\item \label{pl1} for all $i \in I$, $f_{i,i} = \id_{\mathcal{X}_i}$;
\item \label{pl2} for all $(i,j,k) \in I^3$ with $i \leqslant j \leqslant k$, $f_{i,k} = f_{i,j} \circ f_{j,k}$;
\item \label{pl3} for $(i, j) \in I^2$ with $i \leqslant j$, $f_{i,j}$ is a measurable, continuous, and linear map.
\end{axioms}
In particular, $(\mathcal{X}_i, f_{i,j})_{i \leqslant j}$ is a projective system of sets (see\emph{, e.g.}, \cite[III.52]{BouE}). Let $\mathcal{X}$ be its projective limit and, for all $i \in I$, let $f_i$ be the canonical projection from $\mathcal{X}$ to $\mathcal{X}_i$. Let $\tau$ (resp. $\mathcal{F}$) be the initial topology (resp. the initial $\sigma$-algebra) on $\mathcal{X}$ with respect to the family of maps $(f_i)_{i \in I}$. It appears to be that $(\mathcal{X}, \tau, \mathcal{F})$ is also a l.m.v.s.\ (see Propositions \ref{i_vois} and \ref{i_dual} below). We will say that $(\mathcal{X}, \tau, \mathcal{F})$ is the \emph{projective limit of the projective system of l.m.v.s.}\ $(\mathcal{X}_i, \tau_i, \mathcal{F}_i, f_{i,j})_{i \leqslant j}$.

\begin{theorem}[Linear Dawson-Gärtner]\label{ldg}
Let $(\mathcal{X}_i, \tau_i, \mathcal{F}_i, f_{i,j})_{i \leqslant j}$ be a projective system of l.m.v.s.\ and $(\mathcal{X}, \tau, \mathcal{F})$ its projective limit. For all $i \in I$, let $f_i$ be the canonical projection from $\mathcal{X}$ to $\mathcal{X}_i$. Let $(Z_n)_{n \geqslant 1}$ be a sequence of $(\mathcal{X}, \mathcal{F})$-valued random variables and $(v_n)_{n \geqslant 1}$ a sequence of positive numbers diverging to $+\infty$. Define $s$, $\overline{s}$, and $p$ (resp. $s_i$, $\overline{s}_i$, and $p_i$) the entropy, the entropy superior, and the pressure of the sequence $(Z_n, v_n)_{n \geqslant 1}$ (resp. of the sequence $(f_i(Z_n), v_n)_{n \geqslant 1}$). Then,
\begin{equation}\label{ldg_ssp}
s = \inf_{i \in I} s_i \circ f_i, \quad \overline{s} = \inf_{i \in I} \overline{s}_i \circ f_i, \quad\text{and} \quad p^* = \sup_{i \in I} p_i^* \circ f_i.
\end{equation}
\begin{enumerate}
\item \label{ldg_ent} If, for all $i \in I$, $s_i = \overline{s}_i$, then $s = \overline{s}$; in particular, $(Z_n, v_n)_{n \geqslant 1}$ satisfies a weak large deviation principle.
\item \label{ldg_equiv} If, for all $i \in I$, $\overline{s}_i = -p_i^*$, then $\overline{s} = -p^*$.
\item \label{ldg_pgd} If, for all $i \in I$, $(f_i(Z_n), v_n)_{n \geqslant 1}$ satisfies a weak large deviation principle and each compact subset of $\mathcal{X}_i$ is measurable, then $(Z_n, v_n)_{n \geqslant 1}$ satisfies a weak large deviation principle.
\end{enumerate}
\end{theorem}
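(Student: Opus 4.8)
The plan is to reduce the projective-limit statement to the inverse-image one (\Cref{ii}) by a standard compactness/cofinality argument, together with the topological fact that a basic neighborhood in the initial topology $\tau$ involves only finitely many indices, and that the directedness of $(I,\leqslant)$ lets us replace a finite family of indices by a single dominating index.

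First I would establish the formula for $\overline{s}$. Fix $x \in \mathcal{X}$. A local basis of measurable neighborhoods of $x$ for $\tau$ is given by finite intersections $\bigcap_{k=1}^m f_{i_k}^{-1}(V_k)$ with $V_k \in \mathcal{V}^{i_k}_{f_{i_k}(x)}$; by \ref{pl2}–\ref{pl3} and directedness, choosing $j \in I$ with $j \geqslant i_k$ for all $k$, each such set contains $f_j^{-1}(W)$ for a suitable measurable $W \in \mathcal{V}^j_{f_j(x)}$ (namely $W = \bigcap_k f_{i_k,j}^{-1}(V_k)$, which is a neighborhood of $f_j(x)$ because the $f_{i_k,j}$ are continuous). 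Hence $\overline{s}(x) = \inf_{V \ni x} \limsup_n v_n^{-1}\log\Prob(Z_n \in V)$ can be computed by letting $j$ range over $I$ and $W$ over $\mathcal{V}^j_{f_j(x)}$, which gives exactly $\inf_{j\in I}\overline{s}_j(f_j(x))$. The same argument verbatim (replacing $\limsup$ by $\liminf$) gives $s = \inf_i s_i\circ f_i$. Note that for each fixed $i$, $\overline{s}_i \circ f_i = \overline{s}$ when one restricts the neighborhood basis to those coming from $f_i$ alone is exactly the content of \Cref{ii} applied to $f_i \colon \mathcal{X}\to\mathcal{X}_i$; so one can alternatively phrase this as: $\overline{s} \leqslant \overline{s}_i\circ f_i$ for each $i$ by monotonicity of $\inf$, and the reverse inequality $\overline{s} \geqslant \inf_i \overline{s}_i\circ f_i$ follows from the finite-support description of $\tau$-neighborhoods plus directedness.

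For the pressure identity $p^* = \sup_i p_i^*\circ f_i$: by \Cref{ii} applied to each $f_i$ we get $p_i^*\circ f_i \leqslant p^*$, hence $\sup_i p_i^*\circ f_i \leqslant p^*$. For the reverse, I would use that the dual $\mathcal{X}^*$ of a projective limit is the union of the images of the $\mathcal{X}_i^* \circ f_i$ — more precisely, every continuous linear functional on $\mathcal{X}$ factors as $\lambda = \lambda_i \circ f_i$ for some $i\in I$ and some $\lambda_i \in \mathcal{X}_i^*$, because $\lambda$ is continuous for $\tau$, hence bounded on some basic $\tau$-neighborhood, which (by the previous paragraph's reduction) may be taken of the form $f_j^{-1}(W)$; then $\lambda$ vanishes on $\ker f_j$ and descends to a continuous functional on $f_j(\mathcal{X})\subseteq \mathcal{X}_j$, which extends to $\mathcal{X}_j^*$ — or, more cleanly, I would lean on \Cref{i_dual} (cited in the excerpt as establishing that the projective limit is a l.m.v.s.), which should give precisely this factorization. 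Granting it, $p^*(x) = \sup_\lambda(\lambda(x)-p(\lambda)) = \sup_i \sup_{\lambda_i\in\mathcal{X}_i^*}(\lambda_i(f_i(x)) - p(\lambda_i\circ f_i))$, and since $p(\lambda_i\circ f_i) = \limsup_n v_n^{-1}\log\Espe(e^{v_n\lambda_i(f_i(Z_n))}) = p_i(\lambda_i)$, this equals $\sup_i p_i^*(f_i(x))$.

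The three enumerated consequences then follow formally from \eqref{ldg_ssp}: if $s_i = \overline{s}_i$ for all $i$ then $s = \inf_i s_i\circ f_i = \inf_i \overline{s}_i\circ f_i = \overline{s}$, and the weak LDP follows since \ref{lb} and \ref{ubk} are automatic once $s = \overline{s}$ (as discussed after \eqref{ineq_ssp}). If $\overline{s}_i = -p_i^*$ for all $i$, then $\overline{s} = \inf_i \overline{s}_i\circ f_i = \inf_i(-p_i^*\circ f_i) = -\sup_i p_i^*\circ f_i = -p^*$. For part \ref{ldg_pgd}, a weak LDP for $(f_i(Z_n),v_n)$ means $s_i = \overline{s}_i$ on every relatively compact set and the upper bound \ref{ubk} holds; I would invoke \Cref{ii}(\ref{ii_pgd}) for each $f_i$ to transfer the weak LDP along $f_i$, but the genuinely delicate point — the main obstacle — is the upper bound on compacts for $(Z_n,v_n)$ itself: a compact $K\subseteq\mathcal{X}$ has compact image $f_i(K)$ but $K$ need not be a full preimage, so one must cover $\overline{K}$ by finitely many basic neighborhoods, push each down to a single (large) index using directedness, and assemble the estimates with the principle of the largest term — exactly the mechanism already used to prove \ref{ubk} from \cite[Lemma 2.3, Lemma 2.5]{LeP95}, now carried out uniformly over the finite index set produced by compactness. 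The measurability hypothesis on compact subsets of the $\mathcal{X}_i$ (and hence, one checks, of $\mathcal{X}$) is what makes all the probabilities $\Prob(Z_n\in K)$ and $\Prob(f_i(Z_n)\in f_i(K))$ meaningful in this step.
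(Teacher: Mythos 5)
Your treatment of \eqref{ldg_ssp} and of parts \ref{ldg_ent}--\ref{ldg_equiv} is correct and matches the paper: the reduction of a basic $\tau$-neighborhood to a single dominating index via directedness is exactly Proposition \ref{i_vois}, the factorization $\lambda=\lambda_i\circ f_i$ via vanishing on $\Ker f_i$ and Hahn--Banach is Proposition \ref{i_dual}, and the two conditional statements are then formal consequences.

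The gap is in part \ref{ldg_pgd}. The mechanism you describe there --- cover $\overline{K}$ by basic neighborhoods $V(x)$, extract a finite subcover, and assemble $\Prob(Z_n\in K)\leqslant\sum_k\Prob(Z_n\in V(x_k))$ with the principle of the largest term --- is precisely the proof of \ref{ubk}, and what it yields is the compact upper bound with rate $\overline{s}$: the infimum over neighborhoods $V$ of $x$ of $\limsup v_n^{-1}\log\Prob(Z_n\in V)$ is $\overline{s}(x)$, not $s(x)$. That bound holds unconditionally, does not use the hypothesis, and is not the conclusion; a weak LDP requires the compact upper bound with $s$, and nothing in the hypotheses of part \ref{ldg_pgd} gives $s=\overline{s}$. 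You correctly note that $\Prob(Z_n\in K)\leqslant\Prob(f_i(Z_n)\in f_i(\overline{K}))$, and this is indeed where the hypothesis enters: the weak LDP for $(f_i(Z_n),v_n)$ applied to the compact measurable set $f_i(\overline{K})$ gives $\limsup_n v_n^{-1}\log\Prob(Z_n\in K)\leqslant\sup_{\overline{K}}s_i\circ f_i$ for every $i$. What is then missing from your sketch is the deterministic min--max inequality
\[
\inf_{i\in I}\sup_{\overline{K}}s_i\circ f_i\leqslant\sup_{\overline{K}}\inf_{i\in I}s_i\circ f_i=\sup_{\overline{K}}s .
\]
This is where the covering argument actually lives, but applied to the functions $s_i\circ f_i$ rather than to the probabilities: one uses the upper semi-continuity of $s_{i(x)}\circ f_{i(x)}$ to pick $V(x)$ on which it is at most $\max(s(x)+\alpha,-1/\alpha)$, extracts a finite subcover of $\overline{K}$, takes an upper bound $j$ of the finitely many indices $i(x_k)$, and uses the monotonicity $s_j\circ f_j\leqslant s_{i(x_k)}\circ f_{i(x_k)}$ for $i(x_k)\leqslant j$ --- itself a small lemma resting on the continuity and measurability of $f_{i,j}$. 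No principle of the largest term is needed at that stage. Note also that invoking \Cref{ii}(\ref{ii_pgd}) for each $f_i$ separately only produces the compact upper bound with rate $s_i\circ f_i\geqslant s$, so it cannot substitute for the min--max step.
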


\section{Proofs of Theorems \ref{ii} and \ref{ldg}}

Let us kill two birds with one stone. Let $(\mathcal{X}, \tau, \mathcal{F})$ be either an inverse image structure or a projective limit structure. The structure $(\mathcal{X}, \tau, \mathcal{F})$ is defined as an initial structure with respect to a family of linear maps $f_i \colon \mathcal{X} \to \mathcal{X}_i$ ($i \in I$) such that $(I, \leqslant)$ is a directed preordered set ($I = \{ 1 \}$ in the case of the inverse image). Moreover, we have the following key result, where, for all $i \in I$ and all point $x_i \in \mathcal{X}_i$, $\mathcal{V}_{i, x_i}$ denotes a local basis of $x_i$ made up of measurable neighborhoods.
\begin{proposition}\label{i_vois}
For all $x \in \mathcal{X}$, the set
\[
\mathcal{V}_x := \{ f_i^{-1}(V_i) \, ; \, i \in I, \, V_i \in \mathcal{V}_{i, f_i(x)} \}
\]
is a local basis of neighborhoods of $x$. In particular, $(\mathcal{X}, \tau, \mathcal{F})$ satisfies \ref{lm1}.
\end{proposition}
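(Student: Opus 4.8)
The plan is to peel back the two initial structures on $\mathcal{X}$ and reduce an arbitrary basic $\tau$-neighborhood of $x$ --- which is a \emph{finite} intersection of preimages --- to a \emph{single} preimage, by exploiting that $I$ is directed; that single preimage can then be shrunk using the prescribed local basis of the relevant coordinate space $\mathcal{X}_j$.

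First I would record the easy half: every member $f_i^{-1}(V_i)$ of $\mathcal{V}_x$ is a measurable $\tau$-neighborhood of $x$. Since $V_i$ is a neighborhood of $f_i(x)$ in $\mathcal{X}_i$, it contains a $\tau_i$-open set $U_i$ around $f_i(x)$, so $f_i^{-1}(U_i)$ is a $\tau$-open set around $x$ contained in $f_i^{-1}(V_i)$ (the projections $f_i$ are continuous for the initial topology $\tau$); and $V_i \in \mathcal{F}_i$ together with the $\mathcal{F}/\mathcal{F}_i$-measurability of $f_i$ (for $\mathcal{F}$ is, by definition, the initial $\sigma$-algebra with respect to the family $(f_i)_{i \in I}$) yields $f_i^{-1}(V_i) \in \mathcal{F}$.

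The core of the argument is to show that $\mathcal{V}_x$ is cofinal among the $\tau$-neighborhoods of $x$. Given such a neighborhood $W$, the definition of the initial topology furnishes finitely many indices $i_1, \dots, i_n \in I$ and $\tau_{i_k}$-open sets $U_{i_k} \ni f_{i_k}(x)$ with $x \in \bigcap_{k=1}^n f_{i_k}^{-1}(U_{i_k}) \subseteq W$. Here I would invoke directedness of $(I, \leqslant)$ to pick $j \in I$ with $i_k \leqslant j$ for every $k$ (in the inverse-image case $I = \{1\}$ and $j = 1$, with the convention $f_{1,1} := \id_{\mathcal{X}_1}$). Since $f_{i_k} = f_{i_k, j} \circ f_j$ whenever $i_k \leqslant j$ (compatibility of the canonical projections with the transition maps), one has $f_{i_k}^{-1}(U_{i_k}) = f_j^{-1}\big( f_{i_k,j}^{-1}(U_{i_k}) \big)$, hence $\bigcap_{k=1}^n f_{i_k}^{-1}(U_{i_k}) = f_j^{-1}(O)$, where $O := \bigcap_{k=1}^n f_{i_k,j}^{-1}(U_{i_k})$ is $\tau_j$-open (continuity of the maps $f_{i_k,j}$, hypothesis \ref{pl3}) and contains $f_j(x)$ because $f_{i_k,j}(f_j(x)) = f_{i_k}(x) \in U_{i_k}$. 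Finally, hypothesis \ref{lm1} applied in $\mathcal{X}_j$ provides $V_j \in \mathcal{V}_{j, f_j(x)}$ with $V_j \subseteq O$, whence $f_j^{-1}(V_j) \subseteq f_j^{-1}(O) \subseteq W$ while $f_j^{-1}(V_j) \in \mathcal{V}_x$. Combined with the first half, this shows $\mathcal{V}_x$ is a local basis of neighborhoods of $x$; since all its members are measurable, $(\mathcal{X}, \tau, \mathcal{F})$ satisfies \ref{lm1}.

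The only step that goes beyond routine bookkeeping about initial topologies and $\sigma$-algebras is the passage from a finite intersection of preimages to the single preimage $f_j^{-1}(O)$: directedness of $I$ is exactly what makes this possible, and it is why the statement is set over a directed index set. Note that linearity of the maps $f_i$ and $f_{i,j}$ plays no role here --- it will matter only for the dual assertions about the pressure in \Cref{ii} and \Cref{ldg}.
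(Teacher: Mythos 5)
Your argument is correct and is precisely the standard proof that the paper delegates to the literature: the paper's own proof merely observes that the inverse-image case follows from the definition of the initial topology and cites Bourbaki (I.29) for the projective-limit case, and what you have written out --- reducing a finite intersection of preimages to a single preimage $f_j^{-1}(O)$ via directedness of $I$ and the compatibility $f_{i_k} = f_{i_k,j} \circ f_j$, then shrinking $O$ inside $\mathcal{X}_j$ using \ref{lm1} --- is exactly that argument in full detail, together with the routine measurability check. No gaps.
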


\begin{proof}
In the case of an inverse image structure, it stems from the definition of the inverse image topology. And, as for a projective limit structure, it is proved in \cite[I.29]{Bou71}.
\end{proof}

\begin{proposition}\label{i_dual}
The topological dual of $\mathcal{X}$ is
\[
\mathcal{X}^* = \{ \lambda_i \circ f_i \, ; \, i \in I, \, \lambda_i \in \mathcal{X}_i^* \}.
\]
In particular, $(\mathcal{X}, \tau, \mathcal{F})$ satisfies \ref{lm2}.
\end{proposition}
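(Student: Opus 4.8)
The plan is to prove the two inclusions defining $\mathcal{X}^*$ separately; the inclusion $\{\lambda_i \circ f_i : i \in I,\ \lambda_i \in \mathcal{X}_i^*\} \subseteq \mathcal{X}^*$ is the easy one, and the reverse inclusion is the substantial point. For the easy inclusion I would only observe that, by construction of the initial topology (equivalently, by Proposition \ref{i_vois}), each $f_i \colon \mathcal{X} \to \mathcal{X}_i$ is continuous; being also linear, $\lambda_i \circ f_i$ is then a continuous linear functional on $\mathcal{X}$ for every $\lambda_i \in \mathcal{X}_i^*$.

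For the reverse inclusion, fix $\lambda \in \mathcal{X}^*$. Since $\{x \in \mathcal{X} : |\lambda(x)| \le 1\}$ is a neighbourhood of $0$, Proposition \ref{i_vois} supplies an index $i \in I$ and a measurable neighbourhood $V_i$ of $0$ in $\mathcal{X}_i$ with $f_i^{-1}(V_i) \subseteq \{|\lambda| \le 1\}$. I would then check that $\ker f_i \subseteq \ker \lambda$: if $f_i(x) = 0$, then $f_i(tx) = 0 \in V_i$ for every real $t$, hence $|t|\,|\lambda(x)| = |\lambda(tx)| \le 1$ for all $t$, forcing $\lambda(x) = 0$. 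Consequently $\lambda$ factors through $f_i$: there is a linear form $\mu_i$ on the subspace $f_i(\mathcal{X}) \subseteq \mathcal{X}_i$ with $\lambda = \mu_i \circ f_i$, and $\mu_i$ is continuous on $f_i(\mathcal{X})$ for the subspace topology, since for $y = f_i(x) \in f_i(\mathcal{X}) \cap V_i$ one has $|\mu_i(y)| = |\lambda(x)| \le 1$, so $\mu_i$ is bounded on the neighbourhood $f_i(\mathcal{X}) \cap V_i$ of $0$.

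The hard part will be the last step: extending $\mu_i$ from the subspace $f_i(\mathcal{X})$ to a continuous linear form $\lambda_i$ on all of $\mathcal{X}_i$, whereupon $\lambda = \lambda_i \circ f_i$ with $\lambda_i \in \mathcal{X}_i^*$, as desired. I would obtain such an extension from the Hahn–Banach theorem applied to $\mathcal{X}_i$ (if one wishes, after first extending $\mu_i$ by continuity to the closure of $f_i(\mathcal{X})$). This is the only point in the argument that is not purely topological-algebraic, and it is where the proof is genuinely delicate: it requires $\mathcal{X}_i$ to carry enough continuous linear functionals, which is automatic when $\mathcal{X}_i$ is locally convex — in particular for the normed spaces used in the applications — but otherwise should be built in as a standing hypothesis or imported from the projective-limit theory of (locally convex) topological vector spaces.

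Finally, the ``in particular'' assertion comes for free: given $\lambda \in \mathcal{X}^*$, write $\lambda = \lambda_i \circ f_i$ as above; then $\lambda_i$ is $\mathcal{F}_i$-measurable because $(\mathcal{X}_i, \tau_i, \mathcal{F}_i)$ satisfies \ref{lm2}, and $f_i$ is $(\mathcal{F}, \mathcal{F}_i)$-measurable by construction of $\mathcal{F}$ (the initial $\sigma$-algebra for the family $(f_i)_{i \in I}$, respectively the $\sigma$-algebra generated by $f_1$), so $\lambda = \lambda_i \circ f_i$ is $\mathcal{F}$-measurable; hence $(\mathcal{X}, \tau, \mathcal{F})$ satisfies \ref{lm2}.
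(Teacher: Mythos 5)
Your proof is correct and follows essentially the same route as the paper's: find a basic neighbourhood $f_i^{-1}(V_i)$ of $0$ on which $\lambda$ is bounded, deduce $\Ker f_i \subseteq \Ker \lambda$ by scaling, factor $\lambda$ through the subspace $f_i(\mathcal{X})$, and extend by Hahn--Banach. Your caveat about the extension step is well taken---the paper invokes Hahn--Banach without comment even though a l.m.v.s.\ is not assumed locally convex, so strictly speaking this hypothesis is needed there too---but since every space appearing in the applications is locally convex, this does not affect the rest of the paper.
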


\begin{proof}
Let $\lambda \in \mathcal{X}^*$. Since $\lambda$ is continuous, there is a set $V \in \mathcal{V}_0$ such that $V \subset \{ \lambda < 1 \}$. According to \Cref{i_vois}, there exists $i \in I$ and $V_i \in \mathcal{V}_{i,0}$ such that $V = f_i^{-1}(V_i)$. Then, for all $x \in \mathcal{X}$, the equality $f_i(x) = 0$ entails $\lambda(x) = 0$: indeed, if $\lambda(x) \neq 0$, then there is a real number $t$ such that $\lambda(tx) > 1$, so $f_i(x) \notin V_i$, whence $f_i(x) \neq 0$. Therefore, there exists a linear functional $\tilde{\lambda}$ such that the diagram
%
%
%
%
\begin{center}
\begin{tikzcd}
 & \R \\
\mathcal{X} \arrow[ur, "\lambda"] \arrow[r, "q"] \arrow[dr, "f_i"] & \mathcal{X}/\Ker(f_i) \arrow[u, "\tilde{\lambda}"] \arrow[d, "\tilde{f}_i"] \\
 & f_i(\mathcal{X}) \arrow[r, hook] & \mathcal{X}_i
\end{tikzcd}
\end{center}
is commutative, where $q$ is the quotient map and $\tilde{f}_i$ the isomorphism induced by $f_i$. The map $\tilde{\lambda}_i = \tilde{\lambda} \circ \tilde{f}_i^{-1}$ is a linear functional on $f_i(\mathcal{X})$ and $V_i \cap f_i(\mathcal{X}) \subset \{ \tilde{\lambda}_i < 1 \}$: indeed, for $x_i \in V_i \cap f_i(\mathcal{X})$, there exists $x \in V = f_i^{-1}(V_i)$ such that $f(x) = x_i$, whence
\[
\tilde{\lambda}_i(x_i) = \tilde{\lambda} \circ \tilde{f}_i^{-1} \circ f_i(x) = \tilde{\lambda} \circ \tilde{f}_i^{-1} \circ \tilde{f}_i \circ q(x) = \tilde{\lambda} \circ q(x) = \lambda(x) < 1.
\]
So $\tilde{\lambda}_i$ is a continuous linear functional on $f_i(\mathcal{X})$. According to the Hahn-Banach theorem, there exists a linear extension $\lambda_i \in \mathcal{X}_i^*$ of $\tilde{\lambda}_i$ and it satisfies $\lambda = \lambda_i \circ f_i$.
\end{proof}

\begin{proof}[Proof of Theorems \ref{ii} and \ref{ldg}]
Let us begin with the proof of
\begin{equation}\label{i_ssp}
s = \inf_{i \in I} s_i \circ f_i, \quad \overline{s} = \inf_{i \in I} \overline{s}_i \circ f_i, \quad\text{and} \quad p^* = \sup_{i \in I} p_i^* \circ f_i.
\end{equation}
Using \Cref{i_vois}, for all $x \in \mathcal{X}$,
\begin{align*}
s(x) &= \inf_{V \in \mathcal{V}_x} \liminf_{n \to \infty} \frac{1}{v_n} \log \Prob(Z_n \in V)\\
 &= \inf_{i \in I} \inf_{V_i \in \mathcal{V}_{i, f_i(x)}} \liminf_{n \to \infty} \frac{1}{v_n} \log \Prob \big( f_i(Z_n) \in V_i \big) = \inf_{i \in I} s_i \big( f_i(x) \big).
\end{align*}
The proof for $\overline{s}$ is similar. Finally, using \Cref{i_dual},
\[
p^*(x) = \sup_{\lambda \in \mathcal{X}^*} \big( \lambda(x) - p(\lambda) \big) = \sup_{i \in I} \sup_{\lambda_i \in \mathcal{X}_i^*} \big( \lambda_i \circ f_i(x) - p(\lambda_i \circ f_i) \big) = \sup_{i \in I} p_i^*\big( f_i(x) \big).
\]
Assertions \ref{ldg_ent} and \ref{ldg_equiv} follow immediately. Let us prove assertion \ref{ldg_pgd}. Let $K \in \mathcal{F}$ be a relatively compact subset of $\mathcal{X}$ and $i \in I$. Then, using the fact that $f_i(\overline{K})$ is compact, thus measurable,
\[
\limsup_{n \to \infty} \frac{1}{v_n} \log \Prob(Z_n \in K) \leqslant \limsup_{n \to \infty} \frac{1}{v_n} \log \Prob\big( f_i(Z_n) \in f_i(\overline{K}) \big) \leqslant \sup_{ f_i( \overline{K} )  } s_i = \sup_{\overline{K}} s_i \circ f_i
\]
and it remains to prove the min-max inequality
\begin{equation} \label{minmax}
\inf_{i \in I} \sup_{\overline{K}} s_i \circ f_i \leqslant \sup_{\overline{K}} \inf_{i \in I} s_i \circ f_i = \sup_{\overline{K}} s.
\end{equation}
Let $\alpha > 0$. Using \eqref{i_ssp}, for all $x \in \overline{K}$, there exists $i(x) \in I$ such that
\[
s_{i(x)} \circ f_{i(x)} (x) \leqslant \max\big( s(x) + \alpha/2, -1/\alpha - \alpha/2 \big).
\]
Since $s_{i(x)} \circ f_{i(x)}$ is upper semi-continuous, there exists $V(x) \in \mathcal{V}_x$ such that
\[
\sup_{V(x)} s_{i(x)} \circ f_{i(x)} \leqslant \max\big( s_{i(x)} \circ f_{i(x)} (x) + \alpha/2, -1/\alpha \big).
\]
Hence
\[
\sup_{V(x)} s_{i(x)} \circ f_{i(x)} \leqslant \max\big( s(x) + \alpha, -1/\alpha \big).
\]
The cover $\ensavec{V(x)}{x \in \overline{K}}$ of $\overline{K}$ admits a finite subcover $\ensavec{V(x_k)}{k \in \{ 1, \ldots , r \}}$. Since $I$ is a directed set, there exists an upper bound $j \in I$ of $\{ i(x_1), \ldots , i(x_r) \}$. Let us check that
\begin{equation}\label{ineg_s}
\forall k \in \{ 1, \ldots , r \} \qquad s_j \circ f_j \leqslant s_{i(x_k)} \circ f_{i(x_k)}.
\end{equation}
For $i \leqslant j$,
\[
s_i \circ f_i = s_i \circ f_{i,j} \circ f_j,
\]
so we only need to prove that $s_j \leqslant s_i \circ f_{i,j}$. For all $x \in X_j$, since $f_{ij}^{-1}(V_i) \in \mathcal{V}_{j, x}$,
\begin{align*}
\inf_{V_j \in \mathcal{V}_{j, x}} \liminf_{n \to \infty} \frac{1}{v_n} \log \Prob\big( f_j(Z_n) \in V_j \big)
 & \leqslant \inf_{V_i \in \mathcal{V}_{i, f_{i,j}(x)}} \liminf_{n \to \infty} \frac{1}{v_n} \log \Prob\big( f_{ij} \circ f_j(Z_n) \in V_i \big)\\
 & = \inf_{V_i \in \mathcal{V}_{i, f_{i,j}(x)}} \liminf_{n \to \infty} \frac{1}{v_n} \log \Prob\big( f_i(Z_n) \in V_i \big)
\end{align*}
and equation \eqref{ineg_s} is proved. Hence
\[
\sup_{\overline{K}} s_j \circ f_j \leqslant \max\bigg( \max_{1 \leqslant k \leqslant r} s(x_k) + \alpha, -1/\alpha \bigg) \leqslant \max\bigg( \sup_{\overline{K}} s + \alpha, -1/\alpha \bigg).
\]
Conclude to \eqref{minmax} by letting $\alpha \to 0$.
\end{proof}

\section{Application to Cramér's theory} \label{sec:applic_cramer}

Let $(\mathcal{X}, \tau, \mathcal{F})$ be a l.m.v.s. Let $(X_n)_{n \geqslant 1}$ be a sequence of independent and identically distributed (i.i.d.)\ random variables over $(\mathcal{X}, \mathcal{F})$. In this section, we consider the case when $Z_n$ is the empirical mean $\overline{X}_n = (X_1 + X_2 + \cdots + X_n)/n$. In order that $Z_n$ be well defined, we introduce extra assumptions:
\begin{axioms}{LM}
\setcounter{axioms}{2}
\item \label{lm3} the addition $(x,y) \in (\mathcal{X}^2, \mathcal{F}^{\otimes 2}) \mapsto x+y \in (\mathcal{X}, \mathcal{F})$ is measurable;
\item \label{lm4} the $\sigma$-algebra $\mathcal{F}$ is dilatation-invariant.
\end{axioms}
As before, we define the entropy $s$, the entropy superior $\overline{s}$, and the pressure $p$ associated to the sequence $(\overline{X}_n, n)_{n \geqslant 1}$, or shortly to the distribution $\mu$ of $X_1$. In this context, the pressure simplifies like this:
\[
p(\lambda) = \log \Espe[e^{\lambda(X_1)}] .
\]
\begin{remark}
All the spaces we will consider will be locally convex spaces in which each point admits a local basis made up of open convex measurable neighborhoods (see \cite[Chapter 4]{Cer07} who introduces this natural assumption). In this context, we also have $s = \overline{s}$: the proof of \cite[Proposition 2]{Petit_2018_CramersTheoremBanach} adapts without change.
\end{remark}

It can be found in standard texts that any separable Banach space $(\mathcal{B}, \tau_{\norme{\cdot}}, \sigma(\tau_{\norme{\cdot}}))$ endowed with its Borel $\sigma$-algebra satisfies \ref{lm1}, \ref{lm2}, \ref{lm3}, and \ref{lm4}. Moreover, in this setting:
\begin{theorem} \label{cramerbanachsep}
Let $(X_n)_{n \geqslant 1}$ be a sequence of i.i.d.\ random variables on a separable Banach space $(\mathcal{B}, \tau_{\norme{\cdot}}, \sigma(\tau_{\norme{\cdot}}))$. The sequence $(\overline{X}_n, n)_{n \geqslant 1}$ satisfies a weak large deviation principle with entropy $s = -p^*$ where $p(\lambda) = \log \Espe\big(e^{\lambda(X_1)}\big)$.
\end{theorem}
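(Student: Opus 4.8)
The plan is to use the Remark above to obtain $s=\overline s$ for free, the always‑true inequality \eqref{ineq_ssp} to dispose of the upper bound, and the classical Cramér lower bound via exponential tilting for the one remaining inequality. First I would check that $(\mathcal{B},\tau_{\norme\cdot},\sigma(\tau_{\norme\cdot}))$ is a locally convex l.m.v.s.\ satisfying \ref{lm1}--\ref{lm4} in which each point admits a local basis of open convex measurable neighborhoods: the open balls are convex and Borel (giving \ref{lm1} and the hypothesis of the Remark), continuous linear functionals are Borel (\ref{lm2}), the addition is continuous with $\sigma(\tau_{\norme\cdot})^{\otimes 2}=\sigma(\tau_{\norme\cdot}\times\tau_{\norme\cdot})$ by second countability (\ref{lm3}), and dilations are continuous (\ref{lm4}). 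Hence $\overline X_n$ is a well‑defined $(\mathcal{B},\mathcal{F})$‑valued random variable and, by the Remark, $s=\overline s$; in particular $(\overline X_n,n)_{n\ge1}$ satisfies a weak large deviation principle with rate function $s$. Since $s=\overline s\le -p^*$ by \eqref{ineq_ssp}, it remains only to prove that $s\ge -p^*$, that is, that for every $x\in\mathcal{B}$ and every open convex neighborhood $V$ of $x$,
\[
\liminf_{n\to\infty}\frac1n\log\Prob(\overline X_n\in V)\ \ge\ -p^*(x).
\]
(One should note that \Cref{ldg}, applied to the finite‑dimensional projections of $\mathcal{B}$ together with classical Cramér in $\R^d$, already yields $-s=p^*$ for the \emph{weak} topology on $\mathcal{B}$; but that is strictly weaker than what is needed here, since the norm topology is finer and hence has a smaller entropy, so the displayed inequality must still be established by hand.)

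For the lower bound I would run the exponential tilting argument. Let $\lambda\in\mathcal{B}^*$ be such that $\norme{\cdot}$ is exponentially integrable under the tilted probability measure $\mu_\lambda(\mathrm{d}y)=e^{\lambda(y)-p(\lambda)}\mu(\mathrm{d}y)$ — which holds, for instance, as soon as $p$ is finite on a norm‑neighborhood of $\lambda$ — so that $\mu_\lambda$ admits a barycenter $m_\lambda\in\mathcal{B}$. Starting from the change‑of‑measure identity
\[
\Prob_\mu(\overline X_n\in V)=e^{np(\lambda)}\,\Espe_{\mu_\lambda}\!\left[\indic\{\overline X_n\in V\}\,e^{-n\lambda(\overline X_n)}\right],
\]
taking $V=B(m_\lambda,\varepsilon)$, using the bound $\lambda(\overline X_n)\le\lambda(m_\lambda)+\norme{\lambda}\varepsilon$ valid on that ball, and invoking the strong law of large numbers in the separable Banach space $\mathcal{B}$ (which gives $\Prob_{\mu_\lambda}(\overline X_n\in B(m_\lambda,\varepsilon))\to1$), one gets
\[
\liminf_{n\to\infty}\frac1n\log\Prob_\mu(\overline X_n\in B(m_\lambda,\varepsilon))\ \ge\ p(\lambda)-\lambda(m_\lambda)-\norme{\lambda}\varepsilon;
\]
letting $\varepsilon\to0$ yields $s(m_\lambda)\ge p(\lambda)-\lambda(m_\lambda)\ge -p^*(m_\lambda)$, hence $s(m_\lambda)=-p^*(m_\lambda)$ at every such barycenter.

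It then remains to propagate this equality to all of $\mathcal{B}$: both $s$ and $-p^*$ are concave and upper semi‑continuous (concavity of $s$ and $\overline s$ for empirical means of i.i.d.\ variables follows from independence and the principle of the largest term), they agree on the set $M$ of tilted barycenters, and $M$ is rich enough that the concave upper‑semicontinuous hull of $-p^*|_M$ is $-p^*$ itself; since $s$ is a concave upper‑semicontinuous function dominating $-p^*$ on $M$, this forces $s\ge -p^*$, whence $s=-p^*$ — alternatively one may invoke directly the Cramér‑type lower bound of \cite{BaZ79} or \cite{Petit_2018_CramersTheoremBanach}. I expect this propagation step to be the main obstacle: one must show that the barycenters of tilted laws suffice to recover the whole concave function $-p^*$, in particular on the boundary of $\{p<\infty\}$, and handle separately the degenerate case $\{p<\infty\}=\{0\}$, where instead $-p^*\equiv0$ and one checks directly that $\frac1n\log\Prob(\overline X_n\in V)\to0$. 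A secondary technical point, classical but requiring care, is the exponential integrability of $\norme{\cdot}$ under $\mu_\lambda$ and the consequent existence of $m_\lambda$ and validity of the strong law in $\mathcal{B}$.
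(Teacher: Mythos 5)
The paper gives no proof of \Cref{cramerbanachsep}: it is quoted as known, with pointers to \cite{DeZ93s}, \cite{Cer07} and \cite{Petit_2018_CramersTheoremBanach}. Your closing fallback (``invoke directly the Cramér-type lower bound of \cite{BaZ79} or \cite{Petit_2018_CramersTheoremBanach}'') therefore coincides with what the paper actually does and is acceptable as such. The first half of your argument is also sound: the verification of \ref{lm1}--\ref{lm4}, the use of the Remark (subadditivity over open convex neighbourhoods) to get $s=\overline{s}$ and hence the weak LDP via \ref{lb} and \ref{ubk}, and the reduction via \eqref{ineq_ssp} to the single inequality $s\geqslant -p^*$.

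As a self-contained proof, however, there is a genuine gap, and it sits exactly where you suspect. The tilting computation only yields $s(m_\lambda)=-p^*(m_\lambda)$ at barycenters of sufficiently integrable tilted laws; the assertion that this set $M$ is ``rich enough that the concave upper semi-continuous hull of $-p^*|_M$ is $-p^*$'' is not a technicality to be checked but is the entire content of infinite-dimensional Cramér, and it is stated without proof. Every published argument spends its effort precisely here, by one of two routes: (i) truncate $\mu$ to balls so that tilted barycenters exist and the norm is exponentially integrable, run the tilting argument there, and pass to the limit (\cite[Theorem 6.1.3]{DeZ93s}); or (ii) prove the dual identity $(-s)^*=p$ using convex tightness of $\mu$ (automatic on a separable Banach space) and recover $-s=p^*$ by biconjugation, since $-s$ is convex (subadditivity) and norm-, hence weakly, lower semi-continuous (\cite{BaZ79}, \cite[Theorem 12.4]{Petit_2018_CramersTheoremBanach}; compare \Cref{prop:petsetoile}). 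Neither route is a routine consequence of what you wrote. A second, smaller but real, error: your sufficient condition for the tilting step is not valid --- finiteness of $p$ on a norm-neighbourhood of $\lambda$ does not imply exponential integrability of $\norme{\cdot}$ under $\mu_\lambda$ in infinite dimension (a supremum over the dual unit sphere cannot be interchanged with the expectation; even everywhere-finiteness of the Laplace transform does not give $\Espe\big(e^{\varepsilon\norme{X_1}}\big)<\infty$), so the set of usable $\lambda$ may be much smaller than you suggest. Either fill the propagation step by one of the two routes above, or simply cite the references, as the paper does.
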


\begin{proof}
See, \emph{e.g.}, \cite[Theorem 6.1.3]{DeZ93s}, \cite[Corollary 16.4]{Cer07}, or \cite[Theorems 12.1 and 12.4]{Petit_2018_CramersTheoremBanach} for different proofs.
\end{proof}

In this section we provide general explicit locally convex spaces which satisfy \ref{lm1}, \ref{lm2}, \ref{lm3}, and \ref{lm4}, in which the equality $s = -p^*$ is always true. In \Cref{sec:counterexample1}, we provide an example where the space satisfy \ref{lm1}, \ref{lm2}, \ref{lm3}, and \ref{lm4}, but in which $s = \overline{s} \neq -p^*$.

\subsection{Cramér's theorem in separable normed vector spaces}

Here, we relax the hypothesis of completeness in \Cref{cramerbanachsep}. Let $(\mathcal{N}, \tau_{\norme{\cdot}}, \sigma(\tau_{\norme{\cdot}}))$ be a separable normed vector space (not necessarily complete) endowed with its Borel $\sigma$-algebra. Obviously, $(\mathcal{N}, \tau_{\norme{\cdot}}, \sigma(\tau_{\norme{\cdot}}))$ satisfies \ref{lm1}, \ref{lm2} and \ref{lm4}. And, since $\mathcal{N}$ is second countable, \ref{lm3} is satisfied.

\begin{theorem} \label{cramerevns}
Let $(X_n)_{n \geqslant 1}$ be a sequence of i.i.d.\ random variables on a separable normed vector space $(\mathcal{N}, \tau_{\norme{\cdot}}, \sigma(\tau_{\norme{\cdot}}))$. The sequence $(\overline{X}_n, n)_{n \geqslant 1}$ satisfies a weak large deviation principle with entropy $s = -p^*$ where $p(\lambda) = \log \Espe\big(e^{\lambda(X_1)}\big)$.
\end{theorem}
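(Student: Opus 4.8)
The plan is to deduce Theorem~\ref{cramerevns} from Theorem~\ref{cramerbanachsep} by realizing the separable normed space $\mathcal{N}$ as the inverse image of its completion and invoking the linear inverse contraction principle (Theorem~\ref{ii}). First I would let $\mathcal{B}$ denote the completion of $(\mathcal{N}, \norme{\cdot})$, which is a separable Banach space, and let $f_1 \colon \mathcal{N} \hookrightarrow \mathcal{B}$ be the canonical (continuous, linear, injective) inclusion. The key observation is that the norm topology and the Borel $\sigma$-algebra on $\mathcal{N}$ coincide with the initial topology and initial $\sigma$-algebra under $f_1$: the topology statement is immediate since $f_1$ is an isometric embedding, and for the $\sigma$-algebra one uses that $\mathcal{N}$ is second countable, so its Borel $\sigma$-algebra is generated by a countable basis of balls, each of which is the trace on $\mathcal{N}$ of a ball of $\mathcal{B}$, hence lies in $f_1^{-1}(\sigma(\tau_{\mathcal{B}}))$; conversely $f_1$ is continuous, hence Borel measurable. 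Thus $(\mathcal{N}, \tau_{\norme{\cdot}}, \sigma(\tau_{\norme{\cdot}}))$ is exactly the inverse image of the l.m.v.s.\ $(\mathcal{B}, \tau_{\norme{\cdot}}, \sigma(\tau_{\norme{\cdot}}))$ under $f_1$, in the precise sense of the subsection preceding Theorem~\ref{ii}.

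Next I would apply Theorem~\ref{ii} to the sequence $Z_n = \overline{X}_n$ of $(\mathcal{N},\mathcal{F})$-valued random variables with $v_n = n$. The push-forwards $f_1(\overline{X}_n) = \overline{f_1(X_1)} + \cdots$ are precisely the empirical means of the i.i.d.\ $\mathcal{B}$-valued random variables $f_1(X_n)$ (linearity of $f_1$), so the sequence $(f_1(\overline{X}_n), n)_{n\geqslant1}$ is exactly the object covered by Theorem~\ref{cramerbanachsep}. Hence $s_1 = \overline{s}_1 = -p_1^*$ on $\mathcal{B}$. By \eqref{ii_ssp} we get $s = s_1 \circ f_1$ and $p^* = p_1^* \circ f_1$, and by parts~\ref{ii_ent} and~\ref{ii_equiv} of Theorem~\ref{ii} we get $s = \overline{s}$ and $\overline{s} = -p^*$, hence $s = -p^*$, and $(\overline{X}_n, n)_{n\geqslant1}$ satisfies a weak large deviation principle. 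Finally I would identify the pressure: since $f_1$ is the inclusion, $p(\lambda)$ for $\lambda\in\mathcal{N}^*$ is computed from the same simplified formula $p(\lambda) = \log\Espe[e^{\lambda(X_1)}]$ noted in the section preamble (valid once \ref{lm3} and \ref{lm4} hold, which they do here), completing the statement.

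I expect the only genuinely delicate point to be the $\sigma$-algebra identification $\sigma(\tau_{\norme{\cdot}}) = f_1^{-1}(\sigma(\tau_{\mathcal{B}}))$, i.e.\ that the Borel $\sigma$-algebra of the subspace $\mathcal{N}$ really is the trace $\sigma$-algebra of the Borel $\sigma$-algebra of $\mathcal{B}$. In general the Borel $\sigma$-algebra of a subspace can be strictly larger than the trace $\sigma$-algebra, but for a second-countable space this pathology does not occur: a countable basis of $\mathcal{N}$ pulls back from open sets of $\mathcal{B}$, generating the trace $\sigma$-algebra, while any open set of $\mathcal{N}$ is a countable union of basis elements, so the two coincide. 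This is exactly where the separability hypothesis of Theorem~\ref{cramerevns} is used (beyond being inherited by $\mathcal{B}$), and it is worth stating explicitly. Everything else is a routine transfer through Theorem~\ref{ii}.
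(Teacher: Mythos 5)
Your proposal is correct and follows essentially the same route as the paper: pass to the completion $\hat{\mathcal{N}}$, identify $(\mathcal{N}, \tau_{\norme{\cdot}}, \sigma(\tau_{\norme{\cdot}}))$ as the inverse image of the separable Banach space $(\hat{\mathcal{N}}, \hat{\tau}, \sigma(\hat{\tau}))$ under the canonical inclusion, and transfer \Cref{cramerbanachsep} through \Cref{ii}. One minor remark: the $\sigma$-algebra identification is less delicate than you suggest, since $\sigma\big(i^{-1}(\hat{\tau})\big) = i^{-1}\big(\sigma(\hat{\tau})\big)$ holds for any map and any generating family of sets, so the Borel $\sigma$-algebra of a topological subspace always coincides with the trace $\sigma$-algebra and second countability is not needed for that step (separability is of course still essential for $\hat{\mathcal{N}}$ to be separable and for \ref{lm3} to hold).
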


\begin{proof}
Just consider the completion $(\hat{\mathcal{N}}, \hat{\tau})$ of $(\mathcal{N}, \tau_{\norme{\cdot}})$, which is a separable Banach space. Let $i \colon \mathcal{N} \hookrightarrow \hat{\mathcal{N}}$ be the canonical inclusion. By construction, $\tau_{\norme{\cdot}}$ is the inverse image topology of $\hat{\tau}$ with respect to $i$. And the Borel $\sigma$-algebra satisfies
\[
\sigma(\tau_{\norme{\cdot}}) = \sigma\big( i^{-1}(\hat{\tau}) \big) = i^{-1}(\sigma(\hat{\tau})) .
\]
So $(\mathcal{N}, \tau_{\norme{\cdot}}, \sigma(\tau_{\norme{\cdot}}))$ is the inverse image of the l.m.v.s.\ $(\hat{\mathcal{N}}, \hat{\tau}, \sigma(\hat{\tau}))$ with respect to $i$. According to \Cref{cramerbanachsep}, $(i(\overline{X}_n), n)_{n \geqslant 1}$ satisfies a weak large deviation principle with entropy $\hat{s} = -\hat{p}^*$ where $\hat{p}(\lambda) = \log \Espe(e^{\lambda(i(X_1))})$. Eventually apply \Cref{ii} to conclude with $s = \hat{s}|_{\mathcal{N}}$ and $p^* = \hat{p}^*|_{\mathcal{N}}$.
\end{proof}

\subsection{Cramér's theorem in locally convex separable spaces} \label{sec:cramerlcss}


The following simple construction makes explicit the settings developped in \cite{BaZ79}, \cite{DeZ93s}, and \cite{Cer07}, and generalizes both Cramér's theorem in separable Banach spaces and Sanov's theorem. It will be generalized even more in the next section, at the expense of technical notions of set theory. Let $\mathcal{X}$ be a vector space and consider a family $\mathcal{C}$ of subsets of $\mathcal{X}$ satisfying
\begin{axiom}{LCS}
\item \label{lcs} every $C \in \mathcal{C}$ is a symmetric algebraically open convex set containing $0$ such that the locally convex topology generated by $C$ is separable (or equivalently second-countable).
\end{axiom}

For all $x \in \mathcal{X}$, define
\[
\mathcal{C}_x = \ensavec{x + \bigcap_{i=1}^r \lambda_i C_i}{r \geqslant 0, \, C_i \in \mathcal{C}, \, \lambda_i > 0}.
\]
Then, $\mathcal{C}_0$ is a local basis of $0$ for a locally convex topology $\tau(\mathcal{C})$ on $\mathcal{X}$ (see\emph{, e.g.}, \cite[II.25]{BouEVT}). As for the $\sigma$-algebra, let
\[
\mathcal{F}(\mathcal{C}) = \sigma\bigg( \bigcup_{x \in \mathcal{X}} \mathcal{C}_x \bigg).
\]

Let us make several points on the setting. First, if $C$ is an algebraically open convex subset containing $0$, then $C \cap (-C)$ is a symmetric algebraically open convex subset containing $0$, so the symmetry can be assumed without any restriction, but it is handy when introducing seminorms. Secondly, $\mathcal{F}(\mathcal{C})$ is not necessarily the Borel $\sigma$-algebra $\sigma(\tau(\mathcal{C}))$: it is the case when $\tau(\mathcal{C})$ is second-countable. Thirdly, each $C \in \mathcal{C}_0$ is symmetric, and the locally convex topology generated by $C$ is separable: indeed, the upper bound of finitely many second-countable topologies is still second-countable. However, $\tau(\mathcal{C})$ is not necessarily separable (a fortiori not second-countable). For instance, take $\mathcal{X} = \R^{\R}$ endowed with the product topology $\tau$ and the cylinder $\sigma$-algebra $\mathcal{F}$. The topology $\tau$ is not separable and the space $(\mathcal{X}, \tau, \mathcal{F})$ is obtained by the previous construction with $\mathcal{C}$ being the set of all open half-spaces $C = \enstq{f \in \mathcal{X}}{f(x) < 1}$ ($x \in \R$).

In the following, we will denote by $M_C$ the Minkowski functional of any convex subset $C$ of $\mathcal{X}$ containing $0$ (see\emph{, e.g.}, \cite{Zal02}). In particular, if $C \in \mathcal{C}$, $M_C$ is a seminorm.

\begin{proposition} \label{prop:lcss_lmtvs}
$(\mathcal{X}, \tau(\mathcal{C}), \mathcal{F}(\mathcal{C}))$ satisfies \ref{lm1}, \ref{lm2}, \ref{lm3}, and \ref{lm4}.
\end{proposition}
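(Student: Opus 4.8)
The plan is to verify each of the four axioms \ref{lm1}, \ref{lm2}, \ref{lm3}, \ref{lm4} separately for the structure $(\mathcal{X}, \tau(\mathcal{C}), \mathcal{F}(\mathcal{C}))$, exploiting the explicit description of the neighborhood basis $\mathcal{C}_x$ and the seminorms $M_C$. The two easy ones are \ref{lm1} and \ref{lm4}: for \ref{lm1}, the very construction provides, at each point $x$, the local basis $\mathcal{C}_x$, whose members are finite intersections $x + \bigcap_{i=1}^r \lambda_i C_i$; since each such set belongs to $\mathcal{F}(\mathcal{C})$ by definition of the generating family, \ref{lm1} holds. For \ref{lm4}, dilatation-invariance of $\mathcal{F}(\mathcal{C})$: a dilatation $x \mapsto tx$ ($t \neq 0$) maps each generator $x + \bigcap_i \lambda_i C_i$ to $tx + \bigcap_i (t\lambda_i) C_i$ (using symmetry of the $C_i$ to absorb the sign of $t$), which is again a generator; hence the image $\sigma$-algebra of $\mathcal{F}(\mathcal{C})$ under a dilatation is contained in $\mathcal{F}(\mathcal{C})$, and applying the same to $t^{-1}$ gives equality. (The case $t = 0$ sends everything to $\{0\}$, which is measurable as $\bigcap_{n} (1/n) C$ for any fixed $C \in \mathcal{C}$.)

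For \ref{lm2}, I would use \Cref{i_dual} — or rather reprove its relevant half directly here — to identify $\mathcal{X}^*$. A continuous linear functional $\lambda$ satisfies $\{|\lambda| < 1\} \supset \bigcap_{i=1}^r \lambda_i C_i$ for some finite family, so $\lambda$ is bounded by a finite maximum of the seminorms $M_{C_i}$; standard locally convex theory (or a direct Hahn-Banach argument as in the proof of \Cref{i_dual}) then shows $\lambda$ factors through the seminormed space associated to $\max_i M_{C_i}$, equivalently $\lambda = \sum$-type combination living in the span of the duals of the individual $(\mathcal{X}, M_C)$. Since each $(\mathcal{X}, M_C)$ generates a separable (second-countable) topology, the seminorm $M_C$ is $\mathcal{F}(\mathcal{C})$-measurable (it is a countable sup of continuous linear functionals, or one uses that balls of $M_C$ are generators), and hence $\lambda$, being continuous with respect to finitely many such seminorms, is $\mathcal{F}(\mathcal{C})$-measurable. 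The key point feeding this is second-countability: a separable seminormed space has a countable dense set, so its open balls — and any continuous linear functional on it — are determined by countably much data, putting them inside the generated $\sigma$-algebra.

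The main obstacle is \ref{lm3}, measurability of addition $(x,y) \mapsto x+y$ from $(\mathcal{X}^2, \mathcal{F}(\mathcal{C})^{\otimes 2})$ to $(\mathcal{X}, \mathcal{F}(\mathcal{C}))$. It suffices to check that the preimage of each generator is measurable; the preimage of $\{z : M_C(z - a) < \varepsilon\}$ under addition is $\{(x,y) : M_C(x + y - a) < \varepsilon\}$, and the trouble is that $M_C(x+y-a)$ is a priori only a function of the pair through the ``diagonal-type'' combination $x+y$, so one cannot split it coordinatewise. The resolution exploits second-countability of the topology generated by $C$ on $\mathcal{X}$: pick a countable $M_C$-dense subset $D \subset \mathcal{X}$; then, by continuity of $M_C$ and the triangle inequality, $M_C(x+y-a) < \varepsilon$ if and only if there exist $d \in D$ and a rational $q$ with $M_C(x - d) < q$ and $M_C(d + y - a) < \varepsilon - q$ — more precisely one writes the open set as a countable union over $(d,q) \in D \times \Q_{>0}$ of products $\{x : M_C(x-d)<q\} \times \{y : M_C(y - (a-d)) < \varepsilon - q\}$, each factor a generator, hence each product in $\mathcal{F}(\mathcal{C})^{\otimes 2}$. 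Handling a finite intersection $\bigcap_i \lambda_i C_i$ rather than a single ball is then routine since a finite intersection of second-countable topologies is second-countable, so one repeats the argument with the seminorm $\max_i \lambda_i^{-1} M_{C_i}$ (or intersects the countably-many product sets obtained for each $i$). Assembling these four verifications completes the proof.
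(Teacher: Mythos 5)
Your proof is correct and follows essentially the same route as the paper's: \ref{lm1} and \ref{lm4} hold by construction; \ref{lm2} is obtained by writing the open half-space $\{\lambda < 1\}$ as a countable union of generator balls, using second countability of the topology generated by an element of $\mathcal{C}_0$ contained in it; and \ref{lm3} uses the identical countable-union-of-products decomposition $\enstq{(x,y)}{x+y \in C} = \bigcup_{u \in Q,\, r \in \Q} \enstq{(x,y)}{M_C(x-u)<r,\ M_C(y+u)<1-r}$ via a countable $\tau_C$-dense set and the triangle inequality for $M_C$. The only (harmless) inaccuracy is the parenthetical claim $\{0\} = \bigcap_n (1/n)C$: that intersection equals $\enstq{x}{M_C(x)=0}$, which may be a nontrivial subspace since $\tau(\mathcal{C})$ need not be Hausdorff, but the $t=0$ case is not part of dilatation-invariance anyway.
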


As a result, we will say that $(\mathcal{X}, \tau(\mathcal{C}), \mathcal{F}(\mathcal{C}))$ is a \emph{locally convex separable space} (\emph{l.c.s.s.).}

\begin{proof}
By construction, \ref{lm1} and \ref{lm4} are satistified. To prove \ref{lm2}, let $\lambda \in \mathcal{X}^*$ and let $H$ be the open half-space $\enstq{x \in \mathcal{X}}{\lambda(x) < 1}$. We only need to prove that $H$ is measurable. Since $H$ is a neighbourhood of $0$, there exists $C \in \mathcal{C}_0$ such that $C \subset H$. Let us show that $H$ is a countable union of homothetic transforms of $C$, so that $H$ is measurable. Let $Q$ be a countable subset of $\mathcal{X}$ which is dense with respect to the locally convex topology $\tau_C$ generated by $C$. We have
\begin{equation} \label{handc}
H \supseteq \bigcup_{\substack{ u \in Q , \, r \in \mathbb{Q}_+ \\ u + rC \subset H} } u + rC
\end{equation}
and we want to show the converse inclusion. Let $x \in H$. Since $H$ is open, $M_H(x) < 1$ and there exists $r \in \mathbb{Q}_+$ such that $0 < r < 1 - M_H(x)$; so $x + rC \subset H$. The set $x + rC/2$ is open with respect to $\tau_C$, so contains a point $u$ of $Q$. Then, $x \in u + rC/2 \subset x + rC \subset H$ and the converse inclusion of \eqref{handc} is proved.

Now we prove \ref{lm3}, i.e.\ the addition $(x, y) \in (\mathcal{X}^2, \mathcal{F}(\mathcal{C})^{\otimes 2}) \mapsto x + y \in (\mathcal{X}, \mathcal{F}(\mathcal{C}))$ is measurable. Let $C \in \mathcal{C}_0$. We only need to prove that $\enstq{(x, y) \in \mathcal{X}^2}{x + y \in C} \in \mathcal{F}(\mathcal{C})^{\otimes 2}$. Remember that the locally convex topology $\tau_C$ generated by $C$ is separable. Let $Q$ be a countable subset of $\mathcal{X}$ which is dense with respect to $\tau_C$. Let us show that
\[
\enstq{(x, y) \in \mathcal{X}^2}{x + y \in C} = \bigcup_{\substack{u \in Q \\ r \in \Q}} \enstq{(x, y) \in \mathcal{X}^2}{M_C(x - u) < r, \, M_C(y + u) < 1 - r}.
\]
To show $\subset$, let $(x, y) \in \mathcal{X}^2$ such that $x + y \in C$. Let $a \defeq M_C(x+y) < 1$. Let $r \in \Q \cap \intervalleoo{0}{(1-a)/2}$. The nonempty open subset $x+rC$ contains a point $u$ of $Q$. Then $M_C(x-u) < r$ (since $C$ is symmetric) and
\[
M_C(y+u) \leqslant M_C(y+x) + M_C(u-x) < a + r < 1 - r.
\]
To show $\supset$, just notice that, for each $(x, y, u) \in \mathcal{X}^3$,
\[
M_C(x+y) \leqslant M_C(x-u) + M_C(y+u).
\]
\end{proof}

\begin{theorem} \label{cramerlcs}
Let $(X_n)_{n \geqslant 1}$ be a sequence of i.i.d.\ random variables on a l.c.s.s.\ $(\mathcal{X}, \tau(\mathcal{C}), \mathcal{F}(\mathcal{C}))$. The sequence $(\overline{X}_n, n)_{n \geqslant 1}$ satisfies a weak large deviation principle with entropy $s = -p^*$ where $p(\lambda) = \log \Espe\big(e^{\lambda(X_1)}\big)$.
\end{theorem}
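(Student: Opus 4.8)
The plan is to realize the l.c.s.s.\ $(\mathcal{X}, \tau(\mathcal{C}), \mathcal{F}(\mathcal{C}))$ as a projective limit of separable normed vector spaces and then invoke the Linear Dawson-Gärtner theorem (\Cref{ldg}) together with Cramér's theorem in separable normed spaces (\Cref{cramerevns}). More precisely, let $I$ be the set of finite subsets of $\mathcal{C}_0$ directed by inclusion; for $i = \{C_1, \ldots, C_r\} \in I$, set $N_i = C_1 \cap \cdots \cap C_r$, let $M_{N_i}$ be the associated seminorm (finite intersection of the seminorms $M_{C_k}$), and let $\mathcal{X}_i$ be the quotient $\mathcal{X}/\ker M_{N_i}$ normed by the induced norm $\norme{\cdot}_i$, equipped with its Borel $\sigma$-algebra. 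The topology on $\mathcal{X}_i$ is separable (it is the quotient of the separable topology $\tau_{N_i}$ generated by $N_i$, which is second-countable as a finite supremum of second-countable topologies), so $(\mathcal{X}_i, \tau_{\norme{\cdot}_i}, \sigma(\tau_{\norme{\cdot}_i}))$ is a separable normed vector space. For $i \leqslant j$ in $I$ the inclusion $\ker M_{N_j} \subseteq \ker M_{N_i}$ gives a canonical linear map $f_{i,j} \colon \mathcal{X}_j \to \mathcal{X}_i$, which is continuous (it is $1$-Lipschitz) and, being a quotient of a continuous linear map between separable normed spaces, measurable. The cocycle relations \ref{pl1} and \ref{pl2} are immediate.

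The next step is to check that $(\mathcal{X}, \tau(\mathcal{C}), \mathcal{F}(\mathcal{C}))$ is indeed the projective limit of this system: the canonical projections $f_i \colon \mathcal{X} \to \mathcal{X}_i$ are the quotient maps $q_i$, and I claim $\tau(\mathcal{C})$ is the initial topology with respect to $(f_i)_{i \in I}$ and $\mathcal{F}(\mathcal{C})$ the initial $\sigma$-algebra. For the topology this is just the definition of $\tau(\mathcal{C})$: a basis of neighbourhoods of $0$ is given by the sets $\bigcap_{k=1}^r \lambda_k C_k = \lambda \cdot N_i$ (after rescaling to a common factor, or more carefully by intersecting the preimages under the $f_i$ of the balls of $\mathcal{X}_i$), which are exactly the preimages under the $f_i$ of $0$-neighbourhoods in the $\mathcal{X}_i$. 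For the $\sigma$-algebra, $\mathcal{F}(\mathcal{C}) = \sigma(\bigcup_x \mathcal{C}_x)$ is generated by the translates of the sets $\lambda \cdot N_i$; since $\mathcal{X}_i$ is second-countable, $\sigma(\tau_{\norme{\cdot}_i})$ is generated by any countable basis, hence by countably many open balls, whose preimages under $f_i$ lie in $\sigma(\bigcup_x \mathcal{C}_x)$; conversely each generator $x + \lambda N_i$ is $f_i$-measurable. Thus $\mathcal{F}(\mathcal{C}) = \sigma(\bigcup_i f_i^{-1}(\sigma(\tau_{\norme{\cdot}_i})))$, the initial $\sigma$-algebra. (One also has to observe that $\mathcal{X}$ as a set is the projective limit of the $\mathcal{X}_i$: injectivity of $x \mapsto (f_i(x))_i$ follows because $\bigcap_i \ker M_{N_i} = \{0\}$ by Hausdorffness of $\tau(\mathcal{C})$, which holds since the half-space functionals separate points; surjectivity onto the projective limit is a standard compatibility argument.)

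Once the identification is in place, apply \Cref{cramerevns} to each $(\mathcal{X}_i, \tau_{\norme{\cdot}_i}, \sigma(\tau_{\norme{\cdot}_i}))$: the sequence $(f_i(\overline{X}_n), n)_{n \geqslant 1}$ is the empirical mean of the i.i.d.\ images $f_i(X_k)$ (using that $f_i$ is linear, so $f_i(\overline{X}_n) = \overline{f_i(X_n)}$), hence satisfies a weak large deviation principle with entropy $s_i = -p_i^*$ where $p_i(\lambda_i) = \log \Espe(e^{\lambda_i(f_i(X_1))})$. In particular $s_i = \overline{s}_i = -p_i^*$ for every $i$. Now \Cref{ldg}\eqref{ldg_equiv} gives $\overline{s} = -p^*$ and \Cref{ldg}\eqref{ldg_ent} (applicable since $s_i = \overline{s}_i$) gives $s = \overline{s}$, whence $s = \overline{s} = -p^*$ and $(\overline{X}_n, n)_{n \geqslant 1}$ satisfies a weak large deviation principle; the formula $p(\lambda) = \log \Espe(e^{\lambda(X_1)})$ is the one recorded in the text for empirical means. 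I expect the main obstacle to be the bookkeeping in the second step — precisely verifying that $\mathcal{F}(\mathcal{C})$ coincides with the initial $\sigma$-algebra of the $f_i$ (this is where second-countability of each $\mathcal{X}_i$ is essential, to turn "generated by all translates of norm balls" into "generated by countably many balls") — rather than anything in the large-deviations machinery, which is entirely black-boxed through \Cref{ldg} and \Cref{cramerevns}.
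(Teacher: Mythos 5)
Your overall strategy --- reduce to Cram\'er's theorem in separable normed spaces via the linear Dawson-G\"artner theorem --- is also the paper's strategy, but the way you assemble the projective system contains a genuine gap. You take the members of the system to be the Hausdorff quotients $\mathcal{X}_i = \mathcal{X}/\ker M_{N_i}$ and assert that $\mathcal{X}$ \emph{is} the projective limit of this system. That identification fails in general, on both of the counts you dispatch in parentheses. Injectivity of $x \mapsto (f_i(x))_{i}$ requires $\bigcap_i \ker M_{N_i} = \{0\}$, i.e.\ that $\tau(\mathcal{C})$ be Hausdorff; but nothing in \ref{lcs} forces the sets of $\mathcal{C}$ to separate points (take for $\mathcal{C}$ a single symmetrized open half-space), and the paper deliberately works with not-necessarily-Hausdorff spaces (cf.\ the passage to $\mathcal{X}/\overline{\{0\}}$ in \Cref{sec:counterexample2}). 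Surjectivity onto $\varprojlim \mathcal{X}_i$ is not ``a standard compatibility argument'' either: the projective limit of the quotients is typically strictly larger than the image of $\mathcal{X}$ --- for $\mathcal{X} = \R^{(\N)}$ with the coordinate seminorms the quotients are the $\R^{F}$, $F$ finite, and their projective limit is $\R^{\N}$. Since \Cref{ldg} is stated for $(\mathcal{X},\tau,\mathcal{F})$ \emph{equal to} the projective limit of the system, it cannot be applied to your system as set up.

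The paper circumvents exactly this difficulty by a two-step factorization: it first writes $(\mathcal{X},\tau(\mathcal{C}),\mathcal{F}(\mathcal{C}))$ as the projective limit of the \emph{constant} system $(\mathcal{X},\tau_C,\mathcal{F}_C,\id_{\mathcal{X}})_{C}$, whose underlying sets are all $\mathcal{X}$ and whose transition maps are identities, so that the set-level identification is trivial; then, for each fixed $C$, it realizes $(\mathcal{X},\tau_C,\mathcal{F}_C)$ as the \emph{inverse image} of the separable normed quotient $(\mathcal{N}_C,\tilde{\tau}_C,\sigma(\tilde{\tau}_C))$ under the canonical projection, invoking \Cref{ii}, which requires neither injectivity nor surjectivity of that projection. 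Your remaining steps --- second countability of the quotient topologies, identification of the initial topology and initial $\sigma$-algebra, application of \Cref{cramerevns} and of assertions \ref{ldg_ent} and \ref{ldg_equiv} --- are sound and match the paper's; only the set-level identification of $\mathcal{X}$ with a projective limit of quotients needs to be replaced by this inverse-image detour.
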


\begin{proof}
It appears to be that the previous construction of $(\mathcal{X}, \tau(\mathcal{C}), \mathcal{F}(\mathcal{C}))$ is equivalent to the following one. For each $C \in \mathcal{C}_0$, let $\tau_C$ (resp.\ $\mathcal{F}_C$) be the locally convex topology generated by $C$ (resp.\ the $\sigma$-algebra generated by all homothetic transforms of $C$). Then, $(\mathcal{X}, \tau(\mathcal{C}), \mathcal{F}(\mathcal{C}))$ is the projective limit of the projective system of l.m.v.s.\ $(\mathcal{X}, \tau_C, \mathcal{F}_C, \id_{\mathcal{X}})_{C \supseteq B}$. Now, for $C \in \mathcal{C}_0$, consider the Hausdorff space $(\mathcal{N}_C, \tilde{\tau}_C)$ canonically associated to $(\mathcal{X}, \tau_C)$ and $f_C \colon \mathcal{X} \to N_C$ the canonical projection. Then, since $\tau_C$ is separable, $\mathcal{N}_C$ is a separable normed vector space, and $(\mathcal{X}, \tau_C, \mathcal{F}_C)$ is the inverse image of the l.m.v.s.\ $(\mathcal{N}_C, \tilde{\tau}_C, \sigma(\tilde{\tau}_C))$ under $f_C$ (notice that, since $(\mathcal{N}_C, \tilde{\tau}_C$) is a separable normed space, the $\sigma$-algebra generated by the open balls is equal to the Borel $\sigma$-algebra $\sigma(\tilde{\tau}_C)$). By \Cref{cramerevns}, the conclusion holds in $(\mathcal{N}_C, \tilde{\tau}_C, \sigma(\tilde{\tau}_C))$. And the conclusion holds in $(\mathcal{X}, \tau(\mathcal{C}), \mathcal{F}(\mathcal{C}))$ through theorems \ref{ii} and \ref{ldg}.
\end{proof}

The model of \Cref{cramerlcs} is closely related to the models of \cite{BaZ79} and \cite{Cer07}. In \cite{BaZ79}, they consider a Hausdorff locally convex topological vector space $(\mathcal{X}, \tau)$ endowed with the Borel $\sigma$-algebra $\sigma(\tau)$ and assume conditions of measurability so that $\overline{X}_n$ be measurable. Remark that $p$ is automatically defined when the $\sigma$-algebra is the Borel $\sigma$-algebra. Here, we do not suppose the $\sigma$-algebra to be the Borel $\sigma$-algebra. In \cite{Cer07}, the $\sigma$-algebra is not supposed to be the Borel $\sigma$-algebra either, but it is assumed that the vector space operations are measurable, that each point of $\mathcal{X}$ admits a local basis of open convex measurable subsets of $\mathcal{X}$, and that the continuous linear functionals are measurable. Here, we do not require the scalar multiplication to be measurable. More importantly, \Cref{cramerlcs} provides a simple constructive model for Cramér's theory: separability appears to be the crux of the matter of measurability (even in the generalization exposed in \cref{sec:cramerlcms}). And the model embraces standard applications (see\emph{, e.g.}, \cite[chap.\ 16]{Cer07}). Indeed, we obtain the weak large deviation principle and the equality $s = -p^*$ when:
\begin{itemize}
\item $\mathcal{X}$ is a separable normed vector space endowed with the Borel $\sigma$-algebra ($\mathcal{C} = \{ B(0,1) \}$), in particular when $\mathcal{X}$ is a separable Banach space (standard Cramér's theorem);
\item $\mathcal{X}$ is a vector space in duality with another one $\mathcal{Y}$, and $\mathcal{X}$ is endowed with the weak-topology and the cylinder $\sigma$-algebra ($\mathcal{C}$ is the collection of cylinders $\{ \abs{\langle \cdot , y \rangle} < 1 \}$ for $y \in \mathcal{Y}$.), which allows to recover Sanov's theorem in the $\tau$-topology.
\end{itemize}

\subsection{Cramér's theorem in locally convex measurable spaces} \label{sec:cramerlcms}

In the previous section, we have obtained Cramér's theorem in any projective limit of separable normed spaces. Here, we relax the hypothesis of separability. Let $(\mathcal{N}, \tau_{\norme{\cdot}})$ be a normed space. We say that $(\mathcal{N}, \tau_{\norme{\cdot}}, \sigma(\tau_{\norme{\cdot}}))$ is a \emph{measurable normed space} if the addition $(x,y) \mapsto x+y$ is measurable with respect to the $\sigma$-algebras $\sigma(\tau_{\norme{\cdot}}) \otimes \sigma(\tau_{\norme{\cdot}})$ and $\sigma(\tau_{\norme{\cdot}})$. In particular, $(\mathcal{N}, \tau_{\norme{\cdot}}, \sigma(\tau_{\norme{\cdot}}))$ satisfies \ref{lm1}, \ref{lm2}, \ref{lm3}, and \ref{lm4}.

\begin{theorem} \label{cramerevnm}
Let $(X_n)_{n \geqslant 1}$ be a sequence of i.i.d.\ random variables on a measurable normed space $(\mathcal{N}, \tau_{\norme{\cdot}}, \sigma(\tau_{\norme{\cdot}}))$. The sequence $(\overline{X}_n, n)_{n \geqslant 1}$ satisfies a weak large deviation principle with entropy $s = -p^*$ where $p(\lambda) = \log \Espe\big(e^{\lambda(X_1)}\big)$.
\end{theorem}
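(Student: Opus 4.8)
The plan is to reduce to the separable case \Cref{cramerevns} by restricting everything to a separable closed subspace that carries the law of $X_1$.

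Write $\mu$ for the distribution of $X_1$ on $(\mathcal{N}, \sigma(\tau_{\norme{\cdot}}))$. The first — and crucial — step is to produce a \emph{separable} closed linear subspace $\mathcal{N}_0 \subseteq \mathcal{N}$ with $\mu(\mathcal{N}_0) = 1$. Put $m := \sup \mu(M)$, the supremum ranging over separable closed linear subspaces $M$ of $\mathcal{N}$ (each is Borel, being closed, so $\mu(M)$ is defined); choose $M_k$ with $\mu(M_k) \to m$ and set $\mathcal{N}_0 := \overline{\Vect\big( \bigcup_k M_k \big)}$, which is again a separable closed linear subspace and satisfies $\mu(\mathcal{N}_0) = m$. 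That $m = 1$ is \textbf{the main obstacle}: if $m < 1$, renormalising $\mu$ on $\mathcal{N} \setminus \mathcal{N}_0$ yields a Borel probability measure on $\mathcal{N}$ charging no separable closed linear subspace at all (for such an $M$, $\overline{\Vect(\mathcal{N}_0 \cup M)}$ is separable, closed, and contains $\mathcal{N}_0$, so has $\mu$-mass $\leqslant m$; hence $\mu(M \setminus \mathcal{N}_0) = 0$) — in particular charging no point and no line. Excluding such a measure rests on the classical fact that every Borel probability measure on a metric space of non-(real-valued-)measurable cardinality has separable support; this is the set-theoretic proviso behind ``as soon as the question makes sense'' (and is automatic in any model of $\mathrm{ZFC}$ without real-valued measurable cardinals).

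With $\mathcal{N}_0$ at hand, equip it with the subspace norm; its Borel $\sigma$-algebra is the trace of $\sigma(\tau_{\norme{\cdot}})$ on $\mathcal{N}_0$, so $(\mathcal{N}_0, \tau_{\norme{\cdot}}, \sigma(\tau_{\norme{\cdot}}))$ is a separable normed vector space, and it is exactly the inverse image of the l.m.v.s.\ $(\mathcal{N}, \tau_{\norme{\cdot}}, \sigma(\tau_{\norme{\cdot}}))$ under the canonical inclusion $i \colon \mathcal{N}_0 \hookrightarrow \mathcal{N}$ (the inverse image topology is the subspace topology, the inverse image $\sigma$-algebra the trace $\sigma$-algebra). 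Since $\mu(\mathcal{N}_0) = 1$ and $\mathcal{N}_0$ is a linear subspace, the $X_n$ may be taken $(\mathcal{N}_0, \sigma(\tau_{\norme{\cdot}}))$-valued, and then $\overline{X}_n \in \mathcal{N}_0$ almost surely. \Cref{cramerevns} in $\mathcal{N}_0$ gives a weak large deviation principle there with entropy $s_0 = -p_0^*$ and $p_0(\lambda) = \log \Espe\big( e^{\lambda(X_1)} \big)$. Now apply \Cref{ii} with $Z_n := \overline{X}_n$ and $f_1 := i$: by \eqref{ii_ssp}, $s = s_0$, $\overline{s} = \overline{s}_0$, and $p^* = p_0^*$ on $\mathcal{N}_0$, whence $s = \overline{s} = -p^*$ there.

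It remains to treat $x \in \mathcal{N} \setminus \mathcal{N}_0$. As $\mathcal{N}_0$ is closed, some $V \in \mathcal{V}_x$ avoids $\mathcal{N}_0$, and $\overline{X}_n \in \mathcal{N}_0$ a.s., so $\Prob(\overline{X}_n \in V) = 0$ for all $n$ and $s(x) = -\infty$. By Hahn--Banach there is $\lambda_0 \in \mathcal{N}^*$ with $\lambda_0|_{\mathcal{N}_0} = 0$ and $\lambda_0(x) = 1$; then $\lambda_0(X_1) = 0$ a.s., so $p(t\lambda_0) = \log \Espe\big( e^{t\lambda_0(X_1)} \big) = 0$ for all $t \in \R$, giving $p^*(x) \geqslant \sup_{t \in \R} \big( t - 0 \big) = +\infty$, i.e.\ $-p^*(x) = -\infty = s(x)$; and $\overline{s}(x) = -\infty$ by \eqref{ineq_ssp}. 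Thus $s = \overline{s} = -p^*$ on all of $\mathcal{N}$. Since $\overline{s}$ always satisfies \ref{ubk} and $s$ always satisfies \ref{lb}, the coincidence $s = \overline{s}$ yields the weak large deviation principle, with entropy $s = -p^*$; finally $p(\lambda) = \log \Espe\big( e^{\lambda(X_1)} \big)$ by the simplification of the pressure valid, under \ref{lm3} and \ref{lm4}, for empirical means of i.i.d.\ variables.
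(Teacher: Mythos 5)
Your reduction to the separable case, the construction of the separable closed subspace $\mathcal{N}_0$ by a countable exhaustion, and the treatment of the two regimes $x \in \mathcal{N}_0$ (via \Cref{ii} applied to the inclusion) and $x \notin \mathcal{N}_0$ (via Hahn--Banach) all match the paper's argument and are correct. The genuine gap is exactly at the step you yourself flag as ``the main obstacle'': you establish $\mu(\mathcal{N}_0)=1$ only under the additional set-theoretic hypothesis that the relevant cardinal is not real-valued measurable, and you present this as ``the set-theoretic proviso behind `as soon as the question makes sense'\,''. That is a misreading: the theorem is stated unconditionally in ZFC for measurable normed spaces, and the phrase ``as soon as the question makes sense'' in the paper's remark refers only to the measurability of the addition needed for $\overline{X}_n$ to be defined --- i.e.\ to the defining property of a measurable normed space --- not to an extra axiom about cardinals.

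The missing idea is that the hypothesis you already have --- measurability of $(x,y)\mapsto x+y$ from $\sigma(\tau_{\norme{\cdot}})\otimes\sigma(\tau_{\norme{\cdot}})$ to $\sigma(\tau_{\norme{\cdot}})$ --- itself rules out the pathological case. By Talagrand's theorem (\Cref{lem:Tal79}) it forces $\enspart(\kappa\times\kappa)=\enspart(\kappa)\otimes\enspart(\kappa)$ for the density $\kappa$ of $\mathcal{N}$; by Kunen's results (\Cref{lem:Kun68}) this implies that no $\lambda\leqslant\kappa$ is real-valued measurable, hence that there is no diffuse probability measure on $(\kappa,\enspart(\kappa))$; and Marczewski's theorem (\Cref{lem:Mar48}) then yields the separable support with no further axiom. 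Without this chain your argument only proves the theorem in models of ZFC in which no cardinal below the density of $\mathcal{N}$ is real-valued measurable, which is strictly weaker than the stated result.
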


\begin{remark}
At this point, we can say that the equality $s = -p^*$ for empirical means of i.i.d.\ random variables is always true in the context of normed spaces with their Borel $\sigma$-algebra, as soon as the question makes sense. That is the reason why the example given in \Cref{sec:counterexample1} requires a little more effort.
\end{remark}

As in the construction in \Cref{sec:cramerlcss}, let us call \emph{locally convex measurable space} (\emph{l.c.m.s.}) any $(\mathcal{X}, \tau(\mathcal{C}), \mathcal{F}(\mathcal{C}))$, $\mathcal{C}$ being any family of subsets of $\mathcal{X}$ satisfying
\begin{axiom}{LCM}
\item \label{lcm} every $C \in \mathcal{C}$ is a symmetric algebraically open convex set containing $0$ such that $(\mathcal{N}_C, \tilde{\tau}_C)$ is a measurable normed space,
\end{axiom}
where we recall that $(\mathcal{N}_C, \tilde{\tau}_C)$ is the Hausdorff space canonically associated to $(\mathcal{X},\tau_C)$, $\tau_C$ being the locally convex topology generated by $C$. Following the proof of \Cref{cramerlcs}, we see that a l.c.m.s.\ is a projective limit of inverse images of measurable normed spaces and we deduce the following result.

\begin{theorem} \label{cramerlcm}
Let $(X_n)_{n \geqslant 1}$ be a sequence of i.i.d.\ random variables on a l.c.m.s.\ $(\mathcal{X}, \tau(\mathcal{C}), \mathcal{F}(\mathcal{C}))$. The sequence $(\overline{X}_n, n)_{n \geqslant 1}$ satisfies a weak large deviation principle with entropy $s = -p^*$ where $p(\lambda) = \log \Espe\big(e^{\lambda(X_1)}\big)$.
\end{theorem}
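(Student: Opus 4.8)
The plan is to reduce \Cref{cramerlcm} to \Cref{cramerevnm} by the very same two-step mechanism already used in the proof of \Cref{cramerlcs}, only replacing the appeal to \Cref{cramerevns} by an appeal to \Cref{cramerevnm}. First I would check that the construction of $(\mathcal{X}, \tau(\mathcal{C}), \mathcal{F}(\mathcal{C}))$ is, up to equivalence, a projective limit: for each $C \in \mathcal{C}_0$ let $\tau_C$ be the locally convex topology generated by $C$ and $\mathcal{F}_C$ the $\sigma$-algebra generated by all homothetic transforms of $C$; then $(\mathcal{X}, \tau(\mathcal{C}), \mathcal{F}(\mathcal{C}))$ is the projective limit of the projective system of l.m.v.s.\ $(\mathcal{X}, \tau_C, \mathcal{F}_C, \id_{\mathcal{X}})_{C \supseteq B}$, the maps being identities (monotonicity of the index set by reverse inclusion, with \ref{pl1}--\ref{pl3} immediate since identities are measurable, continuous and linear, and $\tau_C \subseteq \tau_{C'}$, $\mathcal{F}_C \subseteq \mathcal{F}_{C'}$ when $C' \subseteq C$).

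Next I would handle each factor $(\mathcal{X}, \tau_C, \mathcal{F}_C)$ as an inverse image. Let $(\mathcal{N}_C, \tilde{\tau}_C)$ be the Hausdorff normed space canonically associated to $(\mathcal{X}, \tau_C)$, obtained by quotienting out $\Ker M_C$ and completing nothing; let $f_C \colon \mathcal{X} \to \mathcal{N}_C$ be the canonical projection, which is linear, continuous, and measurable. Hypothesis \ref{lcm} says precisely that $(\mathcal{N}_C, \tilde{\tau}_C, \sigma(\tilde{\tau}_C))$ is a measurable normed space; and by construction $\tau_C = f_C^{-1}(\tilde{\tau}_C)$ while $\mathcal{F}_C = f_C^{-1}(\sigma(\tilde{\tau}_C))$ — the latter because the homothetic transforms $rC$ of $C$ are exactly the $f_C$-preimages of the open balls of $\mathcal{N}_C$, and in a (separable or not) normed space the $\sigma$-algebra generated by the open balls need not be the full Borel $\sigma$-algebra, but here $\mathcal{F}_C$ is defined as that ball-$\sigma$-algebra, so there is nothing to reconcile; in fact $\sigma(\tilde\tau_C)$ should be read as the $\sigma$-algebra generated by the balls, matching the hypothesis. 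Hence $(\mathcal{X}, \tau_C, \mathcal{F}_C)$ is the inverse image of the measurable normed space $(\mathcal{N}_C, \tilde{\tau}_C, \sigma(\tilde{\tau}_C))$ under $f_C$.

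Then I would invoke \Cref{cramerevnm}: the sequence $(f_C(\overline{X}_n), n)_{n \geqslant 1}$ of empirical means of the i.i.d.\ variables $f_C(X_n)$ on $(\mathcal{N}_C, \tilde{\tau}_C, \sigma(\tilde{\tau}_C))$ satisfies a weak large deviation principle with $s_C = \overline{s}_C = -p_C^*$, where $p_C(\lambda) = \log \Espe(e^{\lambda(f_C(X_1))})$. Applying \Cref{ii} (specifically \eqref{ii_ssp} together with parts \ref{ii_ent} and \ref{ii_equiv}) lifts this to $(\mathcal{X}, \tau_C, \mathcal{F}_C)$: the entropy, entropy superior and $-p^*$ there all agree. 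Finally, applying \Cref{ldg} — \eqref{ldg_ssp} with parts \ref{ldg_ent} and \ref{ldg_equiv} — over the projective system $(\mathcal{X}, \tau_C, \mathcal{F}_C, \id)_{C \supseteq B}$ yields $s = \overline{s} = -p^*$ on $(\mathcal{X}, \tau(\mathcal{C}), \mathcal{F}(\mathcal{C}))$, whence the weak large deviation principle; and the pressure simplification $p(\lambda) = \log\Espe(e^{\lambda(X_1)})$ holds since the $X_n$ are i.i.d. The one point requiring a little care — the main obstacle, such as it is — is the bookkeeping that $\mathcal{F}(\mathcal{C})$ coincides with the initial $\sigma$-algebra of the projective system, i.e.\ that the $\sigma$-algebra generated by all translates of finite intersections $\bigcap \lambda_i C_i$ is the same as the one generated by all the $\mathcal{F}_C$ ($C \in \mathcal{C}_0$); this is the measurability analogue of the topological identity and follows because finite intersections and homothety/translation operations only ever involve countably many generators, exactly as in the proof of \Cref{prop:lcss_lmtvs}.
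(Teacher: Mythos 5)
Your proof follows exactly the route the paper takes: the paper's entire argument for \Cref{cramerlcm} is the single sentence preceding the statement (``Following the proof of \Cref{cramerlcs}, we see that a l.c.m.s.\ is a projective limit of inverse images of measurable normed spaces and we deduce the following result''), i.e.\ the same decomposition into the projective system $(\mathcal{X}, \tau_C, \mathcal{F}_C, \id_{\mathcal{X}})$ with each factor realized as the inverse image of $(\mathcal{N}_C, \tilde{\tau}_C, \sigma(\tilde{\tau}_C))$ under $f_C$, with \Cref{cramerevnm} substituted for \Cref{cramerevns} and Theorems \ref{ii} and \ref{ldg} applied as before. Your side remark distinguishing the ball $\sigma$-algebra from the Borel $\sigma$-algebra of $\mathcal{N}_C$ in the non-separable case addresses a point the paper passes over in silence, and is the correct way to reconcile $\mathcal{F}_C = f_C^{-1}(\sigma(\text{balls}))$ with the hypothesis \ref{lcm}.
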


The proof of \Cref{cramerevnm} requires some technical notions of set theory. Essentially, we show that any probability measure over a measurable normed space is supported by a separable subspace and apply \Cref{cramerevns}. Let $\kappa$ be the \emph{density} of $\mathcal{N}$, i.e.\ the smallest cardinal $\lambda$ such that there exists a dense subset $D$ of $X$ of cardinality $\lambda$. We denote by $\enspart(E)$ the power set of the set $E$.

\begin{lemma} \label{lem:Tal79}
$\enspart(\kappa \times \kappa) = \enspart(\kappa) \otimes \enspart(\kappa)$.
\end{lemma}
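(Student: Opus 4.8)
The statement concerns the cardinal $\kappa$ alone; the normed space only enters through the value of its density, and the plan is to prove the sole nontrivial inclusion $\enspart(\kappa \times \kappa) \subseteq \enspart(\kappa) \otimes \enspart(\kappa)$ by transfinite induction on $\kappa$, under the paper's standing non-measurability hypothesis on $\kappa$. Identify the underlying set with the ordinal $\kappa$, fixing a well-ordering. Three soft remarks are used throughout: the subsets of $\kappa \times \kappa$ belonging to $\enspart(\kappa) \otimes \enspart(\kappa)$ form a $\sigma$-algebra containing every rectangle $A \times B$, hence it absorbs all countable unions of rectangles; the flip $(\alpha,\beta) \mapsto (\beta,\alpha)$ sends rectangles to rectangles, so it is $(\enspart(\kappa)\otimes\enspart(\kappa))$-bimeasurable and one may symmetrise freely; and a set lies in $\enspart(\kappa)\otimes\enspart(\kappa)$ precisely when it lies in $\mathcal{A}_0\otimes\mathcal{B}_0$ for some countably generated $\mathcal{A}_0,\mathcal{B}_0 \subseteq \enspart(\kappa)$. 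The base case $\kappa \leqslant \aleph_0$ is immediate, as every subset of a countable square is a countable union of singletons.

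For the inductive step I would fix $D \subseteq \kappa \times \kappa$ and split it, along the well-order, into the disjoint union of $D_{<} = D \cap \{(\alpha,\beta) : \alpha < \beta\}$, the diagonal part $D_{=} = D \cap \Delta$ with $\Delta = \{(\alpha,\alpha) : \alpha < \kappa\}$, and $D_{>} = D \cap \{(\alpha,\beta) : \beta < \alpha\}$; by the flip it is enough to treat $D_{=}$ and $D_{<}$. For $D_{=}$ one reduces to the diagonal itself, since $D_{=} = \Delta \cap (S\times S)$ for $S = \{\alpha : (\alpha,\alpha)\in D\}$, and $\Delta$ lands in the product $\sigma$-algebra through a countable point-separating family $(C_n)_{n\in\N}$ of subsets of $\kappa$ — that is, an injection $\kappa \hookrightarrow \{0,1\}^{\N}$, which the standing hypothesis provides (and which is anyway forced, since $\Delta \in \enspart(\kappa)\otimes\enspart(\kappa)$ makes every singleton an atom of a countably generated $\sigma$-algebra, whence $\kappa \leqslant \mathfrak{c}$) — via $\complementaire{\Delta} = \bigcup_{n\in\N} \big(C_n \times \complementaire{C_n}\big) \cup \big(\complementaire{C_n} \times C_n\big)$. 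The substance is $D_{<}$: each horizontal section $\{\beta : (\alpha,\beta)\in D_{<}\}$ sits inside the initial segment $\kappa_{<\alpha}$, a set of size $<\kappa$, so the inductive hypothesis applies on every such segment, and what remains is to reassemble the $\kappa$-many pieces by countable operations. For $\kappa$ regular I would fix an Ulam-type matrix on $\kappa$ and use it to partition $D_{<}$, along the first coordinate, into fewer than $\kappa$ slabs, each supported on a single fibre of the matrix and hence — after transporting its section structure into a smaller cardinal — reducible to the inductive hypothesis together with a countable union of rectangles; the recombination of these $<\kappa$ slabs is in turn organised by a further application of the point-separating device, now on the matrix's index set. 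For $\kappa$ singular I would first cover $\kappa$ by $\operatorname{cf}(\kappa) < \kappa$ sets of size $<\kappa$ and run the same argument at the regular cardinal $\operatorname{cf}(\kappa)$.

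The main obstacle is precisely this reassembly at a regular cardinal: presented naively, $D_{<}$ is a $\kappa$-indexed union of graphs, which generically escapes $\enspart(\kappa)\otimes\enspart(\kappa)$, so one must genuinely exploit the combinatorial smallness of $\kappa$ — the Ulam matrix, equivalently the failure of real-valued measurability — to rewrite it as a countable combination of rectangles and of sets supplied by the induction; the singular case adds to this the bookkeeping of transferring the argument through a cofinal decomposition. The remaining ingredients — the base case, the flip symmetrisation, and the diagonal — are routine once the hypothesis on $\kappa$ is in force, so essentially all of the difficulty is concentrated in that single step.
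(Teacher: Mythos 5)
Your argument starts from the wrong hypothesis and thereby inverts the logical order of the paper. At the point where this lemma appears, the only assumption in force is that $(\mathcal{N}, \tau_{\norme{\cdot}}, \sigma(\tau_{\norme{\cdot}}))$ is a \emph{measurable normed space}, i.e.\ that the addition is measurable from $\sigma(\tau_{\norme{\cdot}}) \otimes \sigma(\tau_{\norme{\cdot}})$ to $\sigma(\tau_{\norme{\cdot}})$, and $\kappa$ is its density; there is no ``standing non-measurability hypothesis on $\kappa$''. The paper's proof is a one-line citation of Talagrand's theorem, which derives $\enspart(\kappa\times\kappa)=\enspart(\kappa)\otimes\enspart(\kappa)$ \emph{from} the measurability of the addition (roughly: fix a uniformly discrete family $(x_\alpha)_{\alpha<\kappa}$, every subset of which is closed hence Borel, and pull suitable Borel sets back through the addition map to code an arbitrary subset of $\kappa\times\kappa$ into the product $\sigma$-algebra). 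The facts you take as given --- that $\kappa\leqslant 2^{\aleph_0}$ and that no $\lambda\leqslant\kappa$ is real-valued measurable --- are the \emph{conclusions} of the next lemma, obtained from the present one via Kunen's theorems; using them as inputs here makes that deduction circular, and your proof never uses the one hypothesis that is actually available.

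Even read as a self-contained claim, ``$\kappa\leqslant 2^{\aleph_0}$ and no $\lambda\leqslant\kappa$ real-valued measurable imply $\enspart(\kappa\times\kappa)=\enspart(\kappa)\otimes\enspart(\kappa)$'' is not provable in ZFC: for $\kappa=2^{\aleph_0}$ the identity holds under CH (Rao) or MA (Kunen) but is independent of ZFC, and its failure can be arranged in models with no real-valued measurable cardinals (whose existence has large-cardinal strength). So the inductive step cannot close, and it breaks exactly where you locate ``the substance'': $D_{<}$ is a union of $\kappa$ graphs whose sections lie in initial segments, but an uncountable union of sets of $\enspart(\kappa)\otimes\enspart(\kappa)$ need not belong to $\enspart(\kappa)\otimes\enspart(\kappa)$, and an Ulam matrix --- which exists on every successor cardinal, real-valued measurable or not --- supplies no countable coding of that union by rectangles and smaller pieces. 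Note also that the injection $\kappa\hookrightarrow\{0,1\}^{\N}$ you invoke for the diagonal is not ``provided by the standing hypothesis'' (non-real-valued-measurability does not bound $\kappa$ by $2^{\aleph_0}$), and your fallback justification presupposes $\Delta\in\enspart(\kappa)\otimes\enspart(\kappa)$, which is part of what is to be proved. The correct route is to quote (or reproduce) Talagrand's argument from the measurability of the addition.
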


\begin{proof}
The result is proved in \cite{Tal79}.
\end{proof}


\begin{lemma} \label{lem:Kun68}
There is no probability measure over $(\kappa, \enspart(\kappa))$ such that every singleton has measure zero.
\end{lemma}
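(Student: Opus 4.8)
The assertion is precisely that the cardinal $\kappa$ is not \emph{real-valued measurable}, so the plan is to reduce to that classical notion and then invoke the Ulam--Banach--Kuratowski theory in the form developed in \cite{Kun68}. Suppose, for contradiction, that some $\sigma$-additive probability $\nu$ on $(\kappa, \enspart(\kappa))$ vanishes on singletons. Then the least cardinal $\theta \leqslant \kappa$ carrying a diffuse $\sigma$-additive probability exists, and a routine minimality argument lets one assume the measure is $\theta$-additive: its additivity is itself a cardinal $\leqslant \theta$ (the $\theta$ singletons are null with union of full mass) carrying a diffuse probability of that very additivity, so it equals $\theta$. Replacing $\nu$, I may assume $\nu$ is a $\theta$-additive diffuse probability on $\enspart(\theta)$ with $\theta \leqslant \kappa$.

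The core step is to show $\theta$ is \emph{weakly inaccessible}, i.e.\ a regular limit cardinal. Regularity is immediate: a union of fewer than $\theta$ sets of $\nu$-measure $0$ is $\nu$-null by $\theta$-additivity, so $\theta$ is not a union of $<\theta$ sets of size $<\theta$. That $\theta$ is a limit cardinal is Ulam's matrix argument: were $\theta = \mu^+$, fix for each $\beta < \theta$ an injection $h_\beta \colon \beta \to \mu$ and set $A_{\alpha,\xi} = \{\beta < \theta : \xi < \beta, \, h_\beta(\xi) = \alpha\}$; then for fixed $\alpha$ the sets $(A_{\alpha,\xi})_{\xi<\theta}$ are pairwise disjoint, while for fixed $\xi$ the complement of $\bigcup_{\alpha<\mu} A_{\alpha,\xi}$ is $\{\beta : \beta \leqslant \xi\}$, of size $\leqslant \mu < \theta$, hence $\nu$-null; so for each $\xi$ there is $\alpha(\xi)$ with $\nu(A_{\alpha(\xi),\xi}) > 0$, and since $\mu^+$ is regular some value $\alpha^*$ is attained on a cofinal set of $\xi$, along which some fixed bound $\nu(A_{\alpha^*,\xi}) \geqslant 1/m$ holds $m+1$ times; these $m+1$ disjoint sets then have total mass exceeding $\nu(\theta)=1$, a contradiction. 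Thus $\theta$ is weakly inaccessible, and the finer analysis of \cite{Kun68} pushes this to weak Mahloness and beyond.

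It remains to see that the density $\kappa$ of the normed space at hand is strictly below the first real-valued measurable cardinal, which contradicts $\theta \leqslant \kappa$ and closes the argument; this last calibration is the step I expect to be delicate. The natural route is to use that $(\mathcal{N}, \sigma(\tau_{\norme{\cdot}}))$ is a \emph{measurable} normed space: since $(x,y) \mapsto x-y$ is $\sigma(\tau_{\norme{\cdot}}) \otimes \sigma(\tau_{\norme{\cdot}})$-measurable and $\{0\}$ is Borel, the diagonal $\{(x,x) : x \in \mathcal{N}\}$ lies in $\sigma(\tau_{\norme{\cdot}}) \otimes \sigma(\tau_{\norme{\cdot}})$, and by a result in the circle of ideas around \Cref{lem:Tal79} (Talagrand's analysis of products of power-set $\sigma$-algebras) a metric space whose diagonal lies in the product of its Borel $\sigma$-algebra with itself has density below the first real-valued measurable cardinal. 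The genuine obstacle is isolating and correctly invoking this metric-space dichotomy and verifying it applies here; everything before it is the standard Ulam argument, and if one works without large-cardinal hypotheses the lemma is simply vacuous, no real-valued measurable cardinal existing at all.
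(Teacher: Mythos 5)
Your first paragraph correctly reduces the statement to showing that no cardinal $\lambda \leqslant \kappa$ is real-valued measurable (this is exactly the paper's appeal to Drake), but the Ulam-matrix paragraph, though correct, buys nothing: weak inaccessibility of $\theta$ is consistent with $\theta \leqslant 2^{\aleph_0}$, so it produces no contradiction with $\theta \leqslant \kappa$. The entire content of the lemma therefore sits in your third paragraph, and the route you sketch there fails. Membership of the diagonal of $\mathcal{N}$ (equivalently, of a $\kappa$-sized discrete subset) in the product $\sigma$-algebra only yields $\kappa \leqslant 2^{\aleph_0}$: the diagonal of \emph{any} set of cardinality at most $2^{\aleph_0}$ lies in the product of the power sets (pull back the diagonal of $\R$ along an injection). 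In particular it cannot exclude a real-valued measurable cardinal $\lambda \leqslant \kappa \leqslant 2^{\aleph_0}$ --- and the consistency of precisely such a cardinal is what the paper exploits in \Cref{sec:counterexample1}. So no ``metric-space dichotomy'' deriving, from the diagonal alone, that the density is below the first real-valued measurable cardinal can be true, and the step you yourself flag as the genuine obstacle is indeed a gap that this approach cannot close.

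What is needed, and what you should have invoked directly since it is already established, is \Cref{lem:Tal79} in full strength: $\enspart(\kappa \times \kappa) = \enspart(\kappa) \otimes \enspart(\kappa)$, not merely membership of the diagonal. The paper's proof runs as follows: by Kunen's Lemma 12.2 this identity forces $\kappa \leqslant 2^{\aleph_0}$ and is inherited by every $\lambda \leqslant \kappa$; and if some $\lambda \leqslant \kappa$ were real-valued measurable, witnessed by a $\lambda$-additive diffuse probability $\mu$ on $\enspart(\lambda)$, then the well-ordering $W = \enstq{(\alpha,\beta) \in \lambda \times \lambda}{\alpha < \beta}$ would belong to $\enspart(\lambda) \otimes \enspart(\lambda)$, hence be $\mu \otimes \mu$-measurable, and Fubini gives $(\mu \otimes \mu)(W) = 0$ from the vertical sections (each of cardinality $< \lambda$, hence $\mu$-null by $\lambda$-additivity and diffuseness) but $(\mu \otimes \mu)(W) = 1$ from the horizontal ones --- this is Kunen's Theorem 12.9. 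Combined with your (correct) first paragraph, that closes the argument; the diagonal alone does not.
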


\begin{proof}
\Cref{lem:Tal79} implies (see \cite[Lemma 12.2]{Kun68}) that $\kappa \leqslant 2^{\aleph_0}$ and, for all $\lambda \leqslant \kappa$, $\enspart(\lambda \times \lambda) = \enspart(\lambda) \otimes \enspart(\lambda)$. Then, applying \cite[Theorem 12.9]{Kun68}, we get that, for all $\lambda \leqslant \kappa$, $\lambda$ is not real-valued measurable (see \cite[Definition 11.1]{Kun68}. And the conclusion stems from \cite[Theorem 1.4]{Dra74}.
\end{proof}

Now, let $\mu$ be a probability measure over $(\mathcal{N}, \tau_{\norme{\cdot}}, \sigma(\tau_{\norme{\cdot}}))$.

\begin{lemma} \label{lem:Mar48}
There exists a closed separable subspace $\tilde{\mathcal{N}}$ of $\mathcal{N}$ such that $\mu(\tilde{\mathcal{N}}) = 1$.
\end{lemma}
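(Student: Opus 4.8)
The plan is to reduce the statement to the classical theorem of Marczewski and Sikorski that a Borel probability measure on a metric space whose density is not real-valued measurable is concentrated on a separable subset, and then to pass to a closed linear span. Concretely, I would first produce a separable Borel set $A \subseteq \mathcal{N}$ with $\mu(A) = 1$. Once this is done, picking a countable set $Q$ dense in $A$, the closed linear span $\tilde{\mathcal{N}} := \overline{\mathrm{span}}(Q)$ is a closed subspace of $\mathcal{N}$; it is separable, because finite rational combinations of elements of $Q$ form a countable dense subset of it (scalar multiplication and addition being continuous in a normed space); and it contains $\overline{Q} \supseteq A$, so $\mu(\tilde{\mathcal{N}}) \geqslant \mu(A) = 1$. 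The natural candidate for $A$ is the topological support $S := \{ x \in \mathcal{N} : \mu(B(x,r)) > 0 \text{ for all } r > 0 \}$, which is closed, so it remains to show that $S$ is separable and that $\mu(\mathcal{N} \setminus S) = 0$. Since the lemma is trivial when $\mathcal{N}$ is separable, we may assume $\kappa$, the density of $\mathcal{N}$, is uncountable.

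Separability of $S$ is the easy half. If $S$ were not separable it would contain, for some $\varepsilon > 0$, an uncountable family $(x_i)_{i \in I}$ with $\|x_i - x_j\| \geqslant \varepsilon$ for $i \neq j$; then the balls $B(x_i, \varepsilon/2)$ would be pairwise disjoint and each of positive $\mu$-measure (because $x_i \in S$), contradicting the elementary fact that a probability space carries at most countably many pairwise disjoint sets of positive measure.

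The heart of the matter is the equality $\mu(\mathcal{N} \setminus S) = 0$. Set $G := \mathcal{N} \setminus S$, an open set every point of which has a $\mu$-null open neighbourhood. I would exhaust $G$ by transfinite recursion. At a typical stage one works inside a Borel set $R \subseteq G$ of density $\leqslant \kappa$; after choosing a maximal $\varepsilon$-separated subset $(y_\beta)_{\beta < \lambda}$ of a suitable stratum of $R$ (so that $\lambda \leqslant \kappa$, by injecting into a dense subset of $\mathcal{N}$), one uses that the half-balls $B(y_\beta, \varepsilon/2) \cap R$ are pairwise disjoint: the cell map $h$ sending a point of $\bigcup_{\beta}(B(y_\beta, \varepsilon/2) \cap R)$ to the index of the cell containing it is \emph{genuinely} measurable into $(\lambda, \mathfrak{P}(\lambda))$, since the preimage of an arbitrary $T \subseteq \lambda$ is the $R$-open set $\bigcup_{\beta \in T}(B(y_\beta, \varepsilon/2) \cap R)$. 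Hence the push-forward of $\mu$ by $h$ is a genuine finite measure on $(\lambda, \mathfrak{P}(\lambda))$, and since $\lambda \leqslant \kappa$ while, by \Cref{lem:Kun68} (whose proof shows that every cardinal $\leqslant \kappa$ fails to be real-valued measurable), no diffuse probability lives on $\mathfrak{P}(\lambda)$, this push-forward is purely atomic, i.e.\ carried by countably many cells. Discarding those cells, iterating on the Borel remainder, and using once more that only countably many of the disjoint pieces produced along the way can carry positive mass, one obtains, for each $\varepsilon > 0$, a countable set $D_\varepsilon$ with $\mu\bigl(\{ x \in \mathcal{N} : \inf_{y \in D_\varepsilon}\|x - y\| < \varepsilon/2 \}\bigr) = 1$. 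Applying this with $\varepsilon = 1/n$ for all $n$ shows $\mu\bigl(\overline{\bigcup_n D_{1/n}}\bigr) = 1$, hence $\mu(G) = 0$. This is exactly the argument of \cite{Mar48}, to which I would refer for the measure-theoretic bookkeeping.

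The delicate point, and the step I expect to be the real obstacle, is organising the recursion so that the part of $G$ \emph{not} captured by the disjoint half-balls at a given scale is eventually controlled. In a general normed space the covering radius of a maximal $\varepsilon$-separated net and the radius guaranteeing disjointness of its balls differ by a factor $2$, so no single scale suffices; one must interleave a stratification by the size of the null neighbourhoods (arranged so that the \emph{covering} balls are themselves $\mu$-null, which forces the exhaustion of $G$ to terminate after $\omega$ macro-stages) with the atomicity input above, and must then check that the residual overlaps are negligible. All of this is classical and contained in \cite{Mar48}; the only point specific to the present setting is that \Cref{lem:Kun68} delivers precisely the non-measurability hypothesis that \cite{Mar48} requires of $\kappa$, the density of $\mathcal{N}$.
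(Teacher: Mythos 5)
Your proposal is correct and follows essentially the same route as the paper: both reduce the statement to \cite[Theorem III]{Mar48} (with \Cref{lem:Kun68} supplying the required non-real-valued-measurability of all cardinals $\leqslant \kappa$), identify the separable full-measure set as the complement of the union of the $\mu$-null open sets, and conclude by taking the closed linear span, which is separable because rational combinations of a countable dense subset are dense in it. The only difference is that you sketch the internals of the Marczewski--Sikorski argument before deferring to \cite{Mar48}, whereas the paper cites it outright.
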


\begin{proof}
Since there is no diffuse probability measure over $(\kappa, \enspart(\kappa))$ (see \Cref{lem:Kun68}), we can apply \cite[Theorem III]{Mar48} and deduce that $\mathcal{N} = S \cup Z$ where $S$ is a separable subset of $\mathcal{N}$, and $\mu(Z) = 0$ ($Z$ is the reunion of the negligeable open subsets of $\mathcal{N}$  and $S = \mathcal{N} \setminus Z$). Then, define $\tilde{\mathcal{N}} = \overline{\Vect(S)}$ the closed linear span of $S$. $\tilde{\mathcal{N}}$ is indeed separable since, if $D$ is a countable dense subset of $S$, then $\Vect_\Q(D)$ is a countable dense subset of $\tilde{\mathcal{N}}$.
\end{proof}

\begin{proof}[Proof of \Cref{cramerevnm}]
Let $\mu$ be the distribution of $X_1$. Let $\tilde{\mathcal{N}}$ be a closed separable subspace of $\mathcal{N}$ such that $\mu(\tilde{\mathcal{N}}) = 1$ (see \Cref{lem:Mar48}). Let $\tilde{s}$ (resp.\ $\tilde{p}$) be the entropy (resp.\ the pressure) associated to $\tilde{\mu} \defeq \mu|_{\tilde{\mathcal{N}}}$. \Cref{cramerevns} implies that $\tilde{s} = \tilde{p}$. Now,
\begin{equation} \label{eq:s}
s(x) = \begin{cases}
\tilde{s}(x) & \text{if $x \in \tilde{\mathcal{N}}$} \\
- \infty & \text{if $x \in \mathcal{N} \setminus \tilde{\mathcal{N}}$.}
\end{cases}
\end{equation}
Moreover, if $x \in \tilde{\mathcal{N}}$, since $\mu(\mathcal{N} \setminus \tilde{\mathcal{N}}) = 0$ and since the restriction $\tilde{\lambda}$ of any $\lambda \in \mathcal{N}^*$ to $\tilde{\mathcal{N}}$ is in $\tilde{\mathcal{N}}^*$,
\begin{align} \label{eq:p_Ntilde}
p^*(x)
  & = \sup_{\lambda \in \mathcal{N}^*} \biggl( \lambda(x) - \log \int_{\tilde{\mathcal{N}}} e^{\lambda(y)} d\mu(y) \biggr) \\
  & = \sup_{\tilde{\lambda} \in \tilde{\mathcal{N}}^*} \biggl( \tilde{\lambda}(x) - \log \int_{\tilde{\mathcal{N}}} e^{\tilde{\lambda}(y)} d\tilde{\mu}(y) \biggr) \nonumber \\
  & = \tilde{p}^*(x) . \nonumber 
\end{align}
And, if $x \in \mathcal{N} \setminus \tilde{\mathcal{N}}$, the Hahn-Banach theorem provides a $\lambda \in \mathcal{N}^*$ such that $\lambda(x) = 1$ and $\lambda$ is zero on $\tilde{\mathcal{N}}$; therefore, since $\mu(\mathcal{N} \setminus \tilde{\mathcal{N}}) = 0$,
\begin{equation} \label{eq:p_notinNtilde}
p^*(x) \geqslant \sup_{t \geqslant 0} \biggl( t\lambda(x) - \log \int_{\tilde{\mathcal{N}}} e^{t\lambda(y)} d\mu(y) \biggr) = \sup_{t \geqslant 0} t = + \infty .
\end{equation}
The conclusion follows from \crefrange{eq:s}{eq:p_notinNtilde}.
\end{proof}

\section{A counterexample to $s = -p^*$} \label{sec:counterexample1}

In this section, we define a sequence $(\overline{X}_n, n)_{n \geqslant 1}$ for which $s = \overline{s} \neq -p^*$. The requirements on the space are quite precise: the topology must be rich enough so that their is no (hidden) convex-tension; the $\sigma$-algebra must be rich enough, so that the space be a l.m.v.s., but not to big, so that the addition be measurable. We will work in the space $\ell^\infty$ of bounded real sequences, equipped with the topology $\tau_{\norme{\cdot}}$ of the norm. The open ball of radius $r$ and centered in $x$ will be denoted by $B(x,r)$. We denote by $\mathcal{C}(\ell^\infty, (\ell^\infty)^*)$ the \emph{cylinder $\sigma$-algebra}, i.e.\ the $\sigma$-algebra over $\ell^\infty$ generated by all the continuous linear functionals on $\ell^\infty$ (see \cite{Vak87}).

\begin{proposition}
$(\ell^\infty, \tau_{\norme{\cdot}}, \mathcal{C}(\ell^\infty, (\ell^\infty)^*))$ satisfies \ref{lm1}, \ref{lm2}, \ref{lm3}, and \ref{lm4}.
\end{proposition}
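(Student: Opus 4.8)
The plan is to verify the four axioms \ref{lm1}--\ref{lm4} in turn. The organizing principle is that the cylinder $\sigma$-algebra $\mathcal{F} \defeq \mathcal{C}(\ell^\infty, (\ell^\infty)^*)$ is, by construction, generated by linear maps, hence automatically compatible with the linear operations, while still being rich enough to see the sup-norm. Axiom \ref{lm2} is simply the definition of $\mathcal{F}$. For \ref{lm4}, I would first record that for every scalar $c$ the homothety $m_c \colon x \mapsto cx$ is measurable, since it suffices to check that $\lambda \circ m_c = c\,\lambda$ is measurable for each generator $\lambda \in (\ell^\infty)^*$, which is clear. Hence $tA = m_{1/t}^{-1}(A) \in \mathcal{F}$ whenever $A \in \mathcal{F}$ and $t \neq 0$; and $0 \cdot A$ is either $\emptyset$ or the singleton $\{0\} = \bigcap_{n \geq 1} (e_n^*)^{-1}(\{0\})$, where $e_n^* \in (\ell^\infty)^*$ is the $n$-th coordinate evaluation, so $\{0\} \in \mathcal{F}$. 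Thus $\mathcal{F}$ is dilatation-invariant.

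The one point worth a second look is \ref{lm1}, and it rests on the observation that the norm is $\mathcal{F}$-measurable: $\norme{y} = \sup_{n \geq 1} \abs{e_n^*(y)}$ is a \emph{countable} supremum of measurable functions, hence measurable, and likewise $y \mapsto \norme{y - x} = \sup_{n \geq 1} \abs{e_n^*(y) - x_n}$ is measurable for every fixed $x \in \ell^\infty$. Therefore each open ball $B(x,r)$ is the preimage of $[0,r)$ under a measurable map, hence lies in $\mathcal{F}$, and $\mathcal{V}_x \defeq \{B(x,r) \, ; \, r > 0\}$ is a local basis at $x$ made of measurable neighborhoods, which is exactly \ref{lm1}. (Alternatively, one sees as for \ref{lm3} below that translations are measurable, and $B(0,r) = \{\norme{\cdot} < r\} \in \mathcal{F}$.)

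Finally, for \ref{lm3} I would apply the standard criterion for measurability of a map into a $\sigma$-algebra generated by a family of real maps: the addition $a \colon (\ell^\infty)^2 \to \ell^\infty$ is measurable with respect to $\mathcal{F}^{\otimes 2}$ and $\mathcal{F}$ as soon as $(x,y) \mapsto \lambda(a(x,y)) = \lambda(x) + \lambda(y)$ is $\mathcal{F}^{\otimes 2}$-measurable for every $\lambda \in (\ell^\infty)^*$. But $(x,y) \mapsto \lambda(x)$ and $(x,y) \mapsto \lambda(y)$ are the two coordinate projections of $(\ell^\infty)^2$ --- measurable by definition of $\mathcal{F}^{\otimes 2}$ --- postcomposed with $\lambda$, which is measurable by \ref{lm2}; their sum is then measurable, and \ref{lm3} follows. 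No step is genuinely difficult; the real content of the proposition is the balance it achieves, and I expect the only delicate point to be keeping track of it: $\mathcal{F}$ must be coarse enough for \ref{lm3} to hold (the Borel $\sigma$-algebra of $(\ell^\infty, \tau_{\norme{\cdot}})$ would be too fine), which is why we take it generated by linear functionals, yet fine enough for \ref{lm1}, which works only because $\norme{\cdot}$ is a \emph{countable} supremum of coordinate functionals. That is precisely what makes $\ell^\infty$ with its cylinder $\sigma$-algebra the right setting for the counterexample of this section.
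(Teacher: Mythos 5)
Your proof is correct, and for \ref{lm1}, \ref{lm2}, \ref{lm4} it matches the paper's (which simply declares \ref{lm2} and \ref{lm4} true ``by construction'' and checks \ref{lm1} via the coordinate functionals $e_i^*$); in fact your treatment of \ref{lm1} is the more careful one, since the paper's displayed identity $B(x,r) = \bigcap_i \enstq{y}{\abs{e_i^*(y-x)}<r}$ is only an inclusion $\subseteq$ in general (the right-hand side contains points with $\norme{y-x}=r$), whereas your observation that $y \mapsto \norme{y-x} = \sup_i \abs{e_i^*(y)-x_i}$ is a countable supremum of measurable functions gives the measurability of the open ball cleanly. The genuine divergence is in \ref{lm3}. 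The paper identifies $\mathcal{C}(\ell^\infty,(\ell^\infty)^*)$ with $\mathcal{F}(\mathcal{C})$ for $\mathcal{C}$ the family of cylinders $\{\abs{\lambda(\cdot)}<1\}$ and then reruns the proof of Proposition~\ref{prop:lcss_lmtvs}, i.e.\ the separability argument expressing $\enstq{(x,y)}{x+y\in C}$ as a countable union via a dense set for the seminorm topology $\tau_C$. You instead invoke the universal property of an initial $\sigma$-algebra: since $\mathcal{F}$ is generated by the continuous linear functionals, measurability of the addition reduces to measurability of $(x,y)\mapsto \lambda(x)+\lambda(y)$ for each $\lambda$, which is immediate. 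Your route is shorter and avoids separability altogether, but it works only because the target $\sigma$-algebra is \emph{exactly} the one generated by linear functionals; the paper's heavier argument is the one that survives in the general l.c.s.s.\ setting of Proposition~\ref{prop:lcss_lmtvs}, where $\mathcal{F}(\mathcal{C})$ is generated by convex sets rather than by functionals and the reduction you use is unavailable. Both arguments are sound; yours is the more economical proof of this particular proposition.
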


\begin{proof}
By construction, \ref{lm2} and \ref{lm4} are satistified. To prove \ref{lm1}, it suffices to see that the open balls of $\ell^\infty$ are in $\mathcal{C}(\ell^\infty, (\ell^\infty)^*)$: indeed, for all $x \in \ell^\infty$ and $r > 0$,
\[
B(x,r) = \bigcap_{i=1}^\infty \enstq{y \in \ell^\infty}{\abs{e_i^*(y-x)} < r} ,
\]
where $(e_i^*)_{i \geqslant 1}$ is the family of coordinate linear functionals. Finally, \ref{lm3} holds since we have $\mathcal{C}(\ell^\infty, (\ell^\infty)^*) = \mathcal{F}(\mathcal{C})$, $\mathcal{C}$ being the set of cynlinders $\{ \abs{\lambda(\cdot)} < 1 \}$ for $\lambda \in (\ell^\infty)^*$, and repeating the same proof as that of \Cref{prop:lcss_lmtvs} (see also \cite[Proposition 2.3]{Vak87}).
\end{proof}

Now, we consider the Cantor space $C = \{ 0, 1 \}^\N \subset \ell^\infty$, equipped with the product $\sigma$-algebra $\enspart(\{ 0, 1 \})^{\otimes \N}$, and the measure $\mu = \mathcal{B}(1/2)^{\otimes \N}$. The measure $\mu$ is the pushforward measure of the Lebesgue measure $\lambda$ over the Borel $\sigma$-algebra $\mathcal{B}(\intervalleff{0}{1})$ through the dyadic development $\varphi \colon \intervalleff{0}{1} \to C$.

\begin{proposition}
The Lebesgue measure extends to the $\sigma$-algebra $\enspart(\intervalleff{0}{1})$ if and only if there exists a real-valued measurable cardinal not greater than $2^{\aleph_0}$.
\end{proposition}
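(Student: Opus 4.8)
The plan is to reformulate the statement in terms of real-valued measurable cardinals via the dyadic isomorphism $\varphi\colon\intervalleff{0}{1}\to C$, and then invoke the set-theoretic dichotomy already used in \Cref{lem:Kun68}. First I would observe that $\varphi$ is a Borel isomorphism between $(\intervalleff{0}{1},\mathcal{B}(\intervalleff{0}{1}))$ and $(C,\enspart(\{0,1\})^{\otimes\N})$ off a countable set (the dyadic rationals, which have two expansions), so the Lebesgue measure $\lambda$ extends to all of $\enspart(\intervalleff{0}{1})$ if and only if $\mu=\mathcal{B}(1/2)^{\otimes\N}$ extends to $\enspart(C)$; and since $C$ has cardinality $2^{\aleph_0}$ and $\mu$ is atomless, this is exactly the statement that there is a nonzero atomless (equivalently, vanishing on singletons) finite measure defined on the full power set of a set of size $2^{\aleph_0}$.

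Next I would recall the classical fact (the same machinery cited in the proof of \Cref{lem:Kun68}, via \cite{Dra74} and \cite{Kun68}) that a cardinal $\lambda$ is \emph{real-valued measurable} precisely when there is a probability measure on $(\lambda,\enspart(\lambda))$ vanishing on singletons, and that if such a measure exists for some $\lambda$ then the least such $\lambda$ is real-valued measurable and $\leqslant 2^{\aleph_0}$. For the forward direction: if $\lambda$ extends to $\enspart(\intervalleff{0}{1})$, the extension is an atomless probability measure on a power set, hence by restriction (pushing to the least cardinality of a non-null set, or simply transporting along a bijection $\intervalleff{0}{1}\leftrightarrow 2^{\aleph_0}$) one obtains a real-valued measurable cardinal $\leqslant 2^{\aleph_0}$. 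For the converse: if $\nu$ is an atomless probability measure on $(\lambda,\enspart(\lambda))$ with $\lambda\leqslant 2^{\aleph_0}$, fix an injection $\lambda\hookrightarrow\intervalleff{0}{1}$; pushing $\nu$ forward gives an atomless measure on $\enspart(\intervalleff{0}{1})$, but this need not dominate Lebesgue measure, so one must instead argue that the existence of \emph{any} real-valued measurable cardinal $\leqslant 2^{\aleph_0}$ already implies $2^{\aleph_0}$ itself is real-valued measurable or, more directly, that Lebesgue measure on $\intervalleff{0}{1}$ admits an extension to the power set — this is precisely Ulam's theorem that the continuum being real-valued measurable is equivalent to the existence of a countably additive extension of $\lambda$ to $\enspart(\intervalleff{0}{1})$, together with the downward closure: if some $\lambda\leqslant 2^{\aleph_0}$ is real-valued measurable then so is $2^{\aleph_0}$ (a standard consequence, see \cite{Dra74}).

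The main obstacle is the converse direction, and specifically this last point: a real-valued measurable cardinal strictly below $2^{\aleph_0}$ does not obviously yield an extension of Lebesgue measure itself, because the witnessing measure lives on a small, possibly Lebesgue-null set. The resolution is the theorem (Ulam, Solovay; see \cite[\S11--12]{Kun68} or \cite{Dra74}) that if there is a real-valued measurable cardinal $\leqslant 2^{\aleph_0}$, then in fact $2^{\aleph_0}$ is real-valued measurable, and real-valued measurability of $2^{\aleph_0}$ is equivalent to the extendability of Lebesgue measure on $\intervalleff{0}{1}$ to $\enspart(\intervalleff{0}{1})$. Granting that, both implications close: the two displayed equivalences chain together as ``$\lambda$ extends to $\enspart(\intervalleff{0}{1})$'' $\iff$ ``$2^{\aleph_0}$ is real-valued measurable'' $\iff$ ``some real-valued measurable cardinal $\leqslant 2^{\aleph_0}$ exists'', which is the claim. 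I would present the argument by citing these facts rather than reproving them, consistent with the level of detail in \Cref{lem:Kun68} and \Cref{lem:Mar48}.
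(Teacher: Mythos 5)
Your overall route is the same as the paper's: the paper proves this proposition by a bare citation of the classical Ulam--Solovay theorem (\cite[p.~131]{Jec06}, Fremlin's 1D), and your proposal is essentially that citation fleshed out with the standard reduction to an atomless probability measure on the power set of a set of cardinality $2^{\aleph_0}$. The forward direction as you sketch it (pass to the least cardinality of a non-null set; Ulam's dichotomy forces that cardinal to be real-valued measurable, and atomlessness of the extension rules out the two-valued/strongly inaccessible branch, so it is $\leqslant 2^{\aleph_0}$) is correct and is exactly what the cited references do.

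One step of your converse is wrong as stated: it is not a theorem that the existence of a real-valued measurable cardinal $\leqslant 2^{\aleph_0}$ implies that $2^{\aleph_0}$ itself is real-valued measurable. In the sense of \cite{Kun68} and \cite{Dra74}, real-valued measurability of $\kappa$ requires a $\kappa$-additive witness, and such cardinals are weakly inaccessible; but $2^{\aleph_0}$ can consistently be, say, a double successor of an atomlessly measurable cardinal (add more random reals over Solovay's model), hence not real-valued measurable even though a real-valued measurable cardinal sits below it. So your intermediate equivalence ``Lebesgue extends $\iff$ $2^{\aleph_0}$ is real-valued measurable'' fails in general, and the chain of three equivalences you present does not hold as written. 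The correct form of the Ulam--Solovay theorem goes directly from ``some $\kappa \leqslant 2^{\aleph_0}$ is real-valued measurable (equivalently, atomlessly measurable)'' to ``Lebesgue measure extends to $\enspart(\intervalleff{0}{1})$'': the nontrivial content is the construction of an extension of Lebesgue measure from the atomless $\kappa$-additive measure on $\enspart(\kappa)$, with no detour through the real-valued measurability of the continuum. You do correctly identify this implication as the crux and point at the right sources, so the defect is repairable by deleting the middle link; but the downward-to-upward transfer you invoke (``a standard consequence'') is not one.
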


\begin{proof}
See \cite[p.\ 131]{Jec06} (see also Fremlin, Real-valued-measurable cardinals (2009) (1D)).
\end{proof}

Suppose that there exists a real-valued measurable cardinal not greater than $2^{\aleph_0}$. Let $\tilde{\lambda}$ be an extension of $\lambda$ to $\enspart(\intervalleff{0}{1})$. We can suppose that $\tilde{\lambda}$ is symmetric, i.e.\ $\sigma \colon x \mapsto 1-x$ is $\tilde{\lambda}$-invariant: take an extension $\tilde{\lambda}$ of $\lambda$ over $\enspart(\intervalleff{0}{1/2})$ and define $\tilde{\lambda}$ over $\enspart(\intervalleof{1/2}{1})$ by $\tilde{\lambda}(A) = \tilde{\lambda}(\sigma(A))$.

Now, the pushforward measure $\tilde{\mu} = \tilde{\lambda} \circ \varphi^{-1}$ is an extension of $\mu$ to $\enspart(C)$. Obviously, $\tilde{\mu}$ extends to a measure over $\enspart(\ell^\infty)$, and we consider the restriction of the latter to $\mathcal{C}(\ell^\infty , (\ell^\infty)^*)$, which will still be denoted by $\tilde{\mu}$. By construction, $\tilde{\mu}$ is symmetric, i.e.\ if $X$ is a random variable with distribution $\tilde{\mu}$, then the distribution of $1-X$ is also $\tilde{\mu}$ (here $1$ denotes the constant sequence $(1)_{n \geqslant 1}$).

In particular, for all $f \in (\ell^\infty)^*$, $f(X)$ has the same distribution as $f(1-X) = f(1) - f(X)$, whence
\[
\forall f \in (\ell^\infty)^* \quad \Espe[f(X)] = f(1/2) .
\]
Now, let $(X_n)_{n \geqslant 1}$ be a sequence of i.i.d.\ random variables on $\ell^\infty$ with distribution $\tilde{\mu}$. Let $(\overline{X}_n)_{n \geqslant 1}$ be the associate sequence of empirical means. Let $p$ be the pressure of $(\overline{X}_n,n)_{n \geqslant 1}$. We have
\begin{align*}
p^*(1/2)
 & = - \inf_{f \in (\ell^\infty)^*} \log \Espe[e^{f(1/2) - f(X)}] \\
 & = - \inf_{f \in (\ell^\infty)^*} \inf_{t \in \R} \log \Espe[e^{t(f(1/2) - f(X))}] \\
 & = 0 .
\end{align*}
Now, let us evaluate the entropy superior $\overline{s}$ of $(\overline{X}_n,n)_{n \geqslant 1}$. For any $x \in \ell^\infty$, we have
\[
\overline{s}(x) = \inf_{\varepsilon > 0} \limsup_{n \to \infty} \frac{1}{n} \sum_{i=1}^\infty \log\Prob\bigl( \abs{X_{1,i} + \dots + X_{n,i} - n x_i} < n \varepsilon \bigr) = -\infty
\]
since $\log \Prob\bigl( \abs{X_{1,i} + \dots + X_{n,i} - n x_i} < n \varepsilon \bigr)$ is negative and independent of $i$. In particular, we have proved that $s = \overline{s} = -\infty \neq -p^*$.

\begin{remark}
We cannot simply consider the probability space $(\ell^\infty, \enspart(\ell^\infty), \tilde{\mu})$: indeed, as we have supposed that there exists a real-valued measurable cardinal not greater than $2^{\aleph_0}$, we can show that the addition on $(\ell^\infty, \enspart(\ell^\infty))$ is not measurable (see \cite{Tal79} and \cite[Lemma 12.2 and Theorem 12.9]{Kun68}).
\end{remark}

\begin{question}
The existence of a real-valued measurable cardinal not greater than $2^{\aleph_0}$ is incompatible with the Continuum Hypothesis. Is there nevertheless a counterexample compatible with the Continuum Hypothesis?
\end{question}

\section{About the equality $p = (-s)^*$} \label{sec:counterexample2}

Throughout this section, $(X_n)_{n \geqslant 1}$ is a sequence of i.i.d.\ random variables on a l.c.m.s.\ $(\mathcal{X}, \tau(\mathcal{C}), \mathcal{F}(\mathcal{C}))$, and $s$ (resp.\ $p$) denotes the entropy (resp.\ pressure) of the sequence $(\overline{X}_n,n)_{n \geqslant 1}$. Notice that our proof of $s = -p^*$ (see \Cref{cramerlcm}) basically relies on \Cref{cramerbanachsep}, so on the dual equality $p = (-s)^*$ valid in any separable Banach space (see \cite[Theorem 12.4]{Petit_2018_CramersTheoremBanach}), where the Fenchel-Legendre transform of a function $g : \mathcal{X} \to [-\infty , +\infty]$ is the function $g^* : \mathcal{X}^* \to [-\infty , +\infty]$ defined by
\[
\forall \lambda \in \mathcal{X}^* \quad g^*(x) = \sup_{x \in \mathcal{X}} \big( \lambda(x) - g(x) \big) .
\]
That being said, the equality $p = (-s)^*$ is generally stronger than the dual equality.

\begin{proposition} \label{prop:petsetoile}
One has $p = (-s)^*$ if and only if $s = -p^*$ and $p$ is $\sigma(\mathcal{X}^*, \mathcal{X})$-lower semi-continuous.
\end{proposition}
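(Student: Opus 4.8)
The plan is to prove the two implications separately; the forward one is almost immediate, while the backward one carries the content and rests on the Fenchel--Moreau theorem.

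\emph{Forward implication.} Assume $p = (-s)^*$. By definition of the Fenchel--Legendre transform, $p(\lambda) = \sup_{x \in \mathcal{X}}\bigl(\lambda(x) + s(x)\bigr)$ is a pointwise supremum of the maps $\lambda \mapsto \lambda(x) + s(x)$; each such map is affine and $\sigma(\mathcal{X}^*, \mathcal{X})$-continuous when $s(x) \in \R$, and identically $-\infty$ when $s(x) = -\infty$, so each is $\sigma(\mathcal{X}^*, \mathcal{X})$-lower semi-continuous, and hence so is $p$. That $s = -p^*$ is precisely \Cref{cramerlcm}, which applies since we are in the setting of a l.c.m.s. (One may also see it directly: transforming $p = (-s)^*$ gives $p^* = (-s)^{**}$, and since the entropy of a sequence of empirical means is concave and upper semi-continuous, $-s$ is a closed proper convex function, whence $(-s)^{**} = -s$ and $p^* = -s$.)

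\emph{Backward implication.} Assume $s = -p^*$ and $p$ is $\sigma(\mathcal{X}^*, \mathcal{X})$-lower semi-continuous. Then $-s = p^*$, hence $(-s)^* = (p^*)^* = p^{**}$, and it remains to show $p^{**} = p$. Recall that here $p(\lambda) = \log \Espe\bigl[e^{\lambda(X_1)}\bigr]$. The function $p$ is convex: for $\lambda, \mu \in \mathcal{X}^*$ and $t \in \intervalleff{0}{1}$, Hölder's inequality gives
\[
p\bigl(t\lambda + (1-t)\mu\bigr) = \log \Espe\Bigl[\bigl(e^{\lambda(X_1)}\bigr)^{t}\bigl(e^{\mu(X_1)}\bigr)^{1-t}\Bigr] \leqslant t\, p(\lambda) + (1-t)\, p(\mu).
\]
It is also proper: $p(0) = \log \Espe[1] = 0$ and $p(\lambda) > -\infty$ for every $\lambda$ since $e^{\lambda(X_1)}$ is positive. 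Being moreover $\sigma(\mathcal{X}^*, \mathcal{X})$-lower semi-continuous, $p$ coincides with its Fenchel biconjugate by the Fenchel--Moreau theorem, applied to the dual pair $\langle \mathcal{X}^*, \mathcal{X}\rangle$: here one uses that the $\sigma(\mathcal{X}^*, \mathcal{X})$-continuous linear functionals on $\mathcal{X}^*$ are exactly the evaluations $\lambda \mapsto \lambda(x)$ for $x \in \mathcal{X}$ (see \cite{Mor67}), so that $p^{**}(\lambda) = \sup_{x \in \mathcal{X}}\bigl(\lambda(x) - p^*(x)\bigr)$ is genuinely the largest $\sigma(\mathcal{X}^*, \mathcal{X})$-lower semi-continuous convex minorant of $p$. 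Thus $(-s)^* = p^{**} = p$.

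The main delicate point is this last invocation of Fenchel--Moreau in a topological vector space that need not be Hausdorff nor normable: one must check that the biconjugate formed with the pairing $\langle \mathcal{X}^*, \mathcal{X}\rangle$ is the closed convex envelope for $\sigma(\mathcal{X}^*, \mathcal{X})$, which reduces exactly to the identification of the dual of $(\mathcal{X}^*, \sigma(\mathcal{X}^*, \mathcal{X}))$ recalled above. Everything else --- convexity and properness of $p$, lower semi-continuity of a Fenchel transform, the elementary transforms of the relation $p = (-s)^*$ --- is routine.
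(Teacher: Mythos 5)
Your proof is correct and takes essentially the same route as the paper's (one-line) proof: pass to convex conjugates and invoke biconjugation, the forward direction being immediate since a Fenchel--Legendre transform is automatically a supremum of $\sigma(\mathcal{X}^*,\mathcal{X})$-lower semi-continuous affine functions, and the backward direction resting on Fenchel--Moreau for the convex, proper, lower semi-continuous function $p$ in the dual pair $\langle \mathcal{X}^*, \mathcal{X}\rangle$. Your appeal to \Cref{cramerlcm} for $s=-p^*$ is legitimate in the standing setting of this section (i.i.d.\ variables on a l.c.m.s.), and your parenthetical alternative via upper semi-continuity and concavity of $s$ matches the paper's own hint.
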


\begin{proof}
Just take the convex conjugate and notice that $s$ is always upper semi-continuous (cf. \cite[§5.d]{Mor67}).
\end{proof}

\begin{example} \label{example:pandsstar1}
Here is an example of a l.c.s.s., borrowed from \cite[Section 2]{Dudley_1969_RandomLinearFunctionals}, where $p \neq (-s)^*$ (and $s = -p^*$ by \Cref{cramerlcs}). In particular, it illustrates the fact that, contrary to the equality $s = -p^*$, the equality $p = (-s)^*$ is not preserved by projective limits. Let $\mathcal{Y} = \mathcal{C}(\intervallefo{0}{\omega_1} ; \R)$ be the space of continuous functions on the ordinal space $\omega_1$. Endow $\mathcal{Y}$ with the topology $\tau$ of uniform convergence on compact subsets of $\intervallefo{0}{\omega_1}$. Recall that $\mathcal{Y}^*$ consists of the bounded regular Borel measures on $\intervallefo{0}{\omega_1}$ with compact support.

Each $f \in \mathcal{Y}$ is eventually constant (see, \emph{e.g.}, \cite[p.\ 70]{SteenSeebach_1995_CounterexamplesInTopology}). Denote by $E(f)$ this constant. $E$ is a discontinuous linear functional on $(\mathcal{Y},\tau)$: any compact subset of $\intervallefo{0}{\omega_1}$ is included in some $\intervalleff{0}{\alpha}$ with $\alpha < \omega_1$, so $E$ is not represented by an element of $\mathcal{Y}^*$. Now consider the algebraic dual $\mathcal{Y}^a$ of $\mathcal{Y}$ endowed with the cylinder $\sigma$-algebra $\mathcal{C}(\mathcal{Y}^a,\mathcal{Y})$ and the Dirac mass $\nu$ at $E$. It is shown in \cite[Section 2]{Dudley_1969_RandomLinearFunctionals} that $\nu^*(\mathcal{Y}^*) = 1$, so $\mu \defeq \nu^*$ defines a probability measure on $(\mathcal{Y}^*, \mathcal{C}(\mathcal{Y}^*,\mathcal{Y}))$ (since $\mathcal{C}(\mathcal{Y}^*,\mathcal{Y}) \subset \mathcal{C}(\mathcal{Y}^a,\mathcal{Y})|_{\mathcal{Y}^*}$). And, applying known results on outer measures, one shows that, for any $f \in \mathcal{Y}$, $p(f) = E(f)$. So, $p$ is not $\sigma(\mathcal{Y},\mathcal{Y}^*)$-lower semi-continuous, which provides the announced counterexample in the l.c.s.s.\ $\mathcal{X} = \mathcal{Y}^*$ endowed with the weak-star topology $\sigma(\mathcal{Y}^*,\mathcal{Y})$ and with the cylinder $\sigma$-algebra $\mathcal{C}(\mathcal{Y}^*,\mathcal{Y})$.
\end{example}

\begin{question}
Is there an example where $p \neq (-s)^*$ on a separable normed space?
\end{question}

Now we give a standard sufficient condition for the equality $p = (-s)^*$. A probability measure $\nu$ on $(\mathcal{X}, \tau(\mathcal{C}), \mathcal{F}(\mathcal{C}))$ is \emph{convex tight} if, for all $\alpha > 0$, there exists a measurable relatively compact convex subset $K$ of $\mathcal{X}$ such that $\nu(K) > 1 - \alpha$.

\begin{theorem}
If the distribution of $X_1$ is convex-tight, then $p = (-s)^*$.
\end{theorem}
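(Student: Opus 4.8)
The plan is to show that convex tightness of $\mu = \mathcal{L}(X_1)$ forces the pressure $p$ to be $\sigma(\mathcal{X}^*, \mathcal{X})$-lower semi-continuous; combined with the equality $s = -p^*$ already established in \Cref{cramerlcm} and the characterisation of \Cref{prop:petsetoile}, this gives $p = (-s)^*$. So the entire content of the theorem reduces to the weak-$*$ lower semi-continuity of $\lambda \mapsto p(\lambda) = \log \Espe[e^{\lambda(X_1)}]$.

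First I would reduce to the following claim: if $\lambda_d \to \lambda$ in $\sigma(\mathcal{X}^*, \mathcal{X})$ (passing to nets, or exploiting that $p$ is convex so that sequential lower semi-continuity along suitable directions suffices), then $\liminf_d p(\lambda_d) \geqslant p(\lambda)$. Fix $\alpha > 0$ and, using convex tightness, choose a measurable relatively compact convex $K$ with $\mu(K) > 1 - \alpha$. On $K$ the functionals $\lambda_d$ converge to $\lambda$ pointwise and, crucially, uniformly: this is the classical fact that a net of continuous linear functionals converging pointwise on a compact convex set converges uniformly on it (an Arzelà–Ascoli / equicontinuity argument on $\overline{K}$, or directly the observation that the evaluation map $\overline{K} \to \R$ is continuous and $\overline{K}$ compact). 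Hence $e^{\lambda_d(\cdot)} \to e^{\lambda(\cdot)}$ uniformly on $K$, so
\[
\Espe\bigl[e^{\lambda_d(X_1)} \indic_K(X_1)\bigr] \longrightarrow \Espe\bigl[e^{\lambda(X_1)} \indic_K(X_1)\bigr].
\]
Therefore
\[
\liminf_d \Espe\bigl[e^{\lambda_d(X_1)}\bigr] \geqslant \liminf_d \Espe\bigl[e^{\lambda_d(X_1)} \indic_K(X_1)\bigr] = \Espe\bigl[e^{\lambda(X_1)} \indic_K(X_1)\bigr],
\]
and letting $K$ exhaust $\mathcal{X}$ along a sequence of tightness sets (monotone convergence, since $e^{\lambda(\cdot)} \geqslant 0$) yields $\liminf_d \Espe[e^{\lambda_d(X_1)}] \geqslant \Espe[e^{\lambda(X_1)}]$; taking logarithms gives $\liminf_d p(\lambda_d) \geqslant p(\lambda)$, i.e.\ $p$ is $\sigma(\mathcal{X}^*,\mathcal{X})$-l.s.c.

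Finally, since $s = -p^*$ holds in any l.c.m.s.\ by \Cref{cramerlcm}, and $p$ is now known to be $\sigma(\mathcal{X}^*, \mathcal{X})$-lower semi-continuous, \Cref{prop:petsetoile} immediately gives $p = (-s)^*$. The main obstacle I anticipate is the passage from pointwise convergence of the $\lambda_d$ on $K$ to uniform convergence on $K$ (so that one may pass the limit inside the expectation), which is where compactness of the closure of $K$ and convexity are used; the dominated/monotone convergence bookkeeping to remove the truncation to $K$ is routine but must be done carefully because $e^{\lambda_d(X_1)}$ need not be uniformly integrable, whence the one-sided (liminf) form of the argument rather than a clean limit.
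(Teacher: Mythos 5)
Your overall strategy --- reduce the theorem to the $\sigma(\mathcal{X}^*,\mathcal{X})$-lower semi-continuity of $p$ and conclude via \Cref{cramerlcm} and \Cref{prop:petsetoile} --- is exactly the alternate proof sketched in the remark following the theorem; the paper's primary proof is different, obtaining $p \leqslant (-s)^*$ directly from a truncated version of Varadhan's lemma applied to $\Espe\bigl(e^{n\lambda(\overline{X}_n)}\indic_K(\overline{X}_n)\bigr)$. However, the central analytic step of your argument is false as stated. The ``classical fact'' that a net of continuous linear functionals converging pointwise on a compact convex set converges uniformly on it would mean that $\sigma(\mathcal{X}^*,\mathcal{X})$ coincides with the topology of uniform convergence on compact convex circled subsets of $\mathcal{X}$; this fails whenever $\mathcal{X}$ contains an infinite-dimensional compact convex set (e.g.\ the closed convex hull of a null sequence in an infinite-dimensional Banach space). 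The Arzel\`a--Ascoli argument you invoke requires the net $(\lambda_d)$ to be equicontinuous; for \emph{sequences} in a Banach space this comes from Banach--Steinhaus, but weak-$*$ convergent \emph{nets} need not be bounded, and since $\sigma(\mathcal{X}^*,\mathcal{X})$ is not first countable in general you cannot test lower semi-continuity on sequences.

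The repair is precisely where the convexity of $p$ must enter (you mention it only in a parenthesis and never use it): a convex function is $\sigma(\mathcal{X}^*,\mathcal{X})$-lower semi-continuous as soon as it is lower semi-continuous for some topology compatible with the duality, in particular for the Mackey topology $\tau(\mathcal{X}^*,\tilde{\mathcal{X}})$, whose basic neighborhoods of $\lambda$ consist of functionals uniformly $\alpha$-close to $\lambda$ on a prescribed absolutely convex compact set. With that topology the estimate $\Espe\bigl[e^{\rho(X_1)}\indic_K(X_1)\bigr] \geqslant e^{-\alpha}\,\Espe\bigl[e^{\lambda(X_1)}\indic_K(X_1)\bigr]$ holds for every $\rho$ in the neighborhood associated with (the absolutely convex hull of) $\overline{K}$, and no passage from pointwise to uniform convergence is needed; this is exactly how the paper's remark proceeds, after passing to the Hausdorff quotient $\tilde{\mathcal{X}}$. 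Your bookkeeping for removing the truncation (Fatou along sets $K_n$ with $\mu(K_n)\to 1$, and a $\min(p(\lambda),1/\alpha)$ device when $p(\lambda)=+\infty$) is essentially fine and matches the paper.
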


\begin{proof}
The proof of \cite[Theorem 12.4]{Petit_2018_CramersTheoremBanach} adapts directly as follows. Let $\lambda \in \mathcal{X}^*$ and let $\alpha > 0$. Since the distribution of $X_1$ is convex-tight and using Fatou's lemma, there exists a measurable relatively compact convex subset $K$ of $\mathcal{X}$ such that
\[
\min( p(\lambda) - \alpha, 1/\alpha ) \leqslant \log \mathbb{E}\bigl(e^{\lambda(X_1)} \mathbf{1}_K(X_1) \bigr) \leqslant \inf_{n \geqslant 1} \frac{1}{n} \log \mathbb{E}\big(e^{n\lambda(\overline{X}_n)} \mathbf{1}_K(\overline{X}_n) \big) .
\]
The inequality $p \leqslant (-s)^*$ follows from a version of Varadhan's lemma (see \cite[Lemma 12.2]{Petit_2018_CramersTheoremBanach}, which extends to the present setting).
\end{proof}

\begin{remark}
According to \cref{cramerlcm} and \cref{prop:petsetoile}, an alternate proof consists in showing that $p$ is $\sigma(\mathcal{X}^*,\mathcal{X})$-lower semi-continuous. The following proof is a slight generalization of the one of \cite[Lemma 12.1]{Cer07}. First, denote by $(\tilde{\mathcal{X}}, \tilde{\tau})$ the Hausdorff topological vector space canonically associated to $(\mathcal{X}, \tau(\mathcal{C}))$, where $\tilde{\mathcal{X}} = \mathcal{X}/\overline{\{ 0 \}}$ and $\tilde{\tau}$ is the quotient topology. Also denote by $\pi \colon \mathcal{X} \to \tilde{\mathcal{X}}$ the canonical projection. By Hahn-Banach theorem, $\overline{\{ 0 \}} = \enstq{x \in \mathcal{X}}{\forall \lambda \in \mathcal{X}^* \quad \lambda(x) = 0}$, so the topological dual of $\tilde{\mathcal{X}}$ is isomorph to $\mathcal{X}^*$: we will identify both, writing, for $(\lambda,x) \in \mathcal{X}^* \times \mathcal{X}$, $\lambda(x) = \lambda(\pi(x))$. Moreover, we have $\sigma(\mathcal{X}^*,\mathcal{X}) = \sigma(\mathcal{X}^*,\tilde{\mathcal{X}})$. Since $p$ is convex, it suffices to show that $p$ is $\tau(\mathcal{X}^*,\tilde{\mathcal{X}})$-lower semi-continuous where $\tau(\mathcal{X}^*,\tilde{\mathcal{X}})$ is the Mackey topology, that is to say the locally convex topology admitting for local basis of $0$ the sets $\{ \lambda \in X^* \ |\ \forall \tilde{x} \in \tilde{K} \quad \abs{\lambda(\tilde{x})} < 1 \}$ where $\tilde{K}$ ranges over the absolutely convex $\sigma(\tilde{\mathcal{X}},\mathcal{X}^*)$-compact subsets of $\tilde{\mathcal{X}}$.

Let $\lambda \in \mathcal{X}^*$ and $\alpha > 0$. Since the distribution of $X_1$ is convex-tight, let $K$ be a measurable relatively $\tau(\mathcal{C})$-compact convex subset of $\mathcal{X}$ such that
\[
\log \mathbb{E}\bigl(e^{\lambda(X_1)} \mathbf{1}_K(X_1) \bigr) \geqslant \min(p(\lambda), 1/\alpha ) - \alpha .
\]
The absolutely convex hull $K_1$ of $\overline{K}$ is $\tau(\mathcal{C})$-compact since it is the image of the compact set $\intervalleff{0}{1} \times \overline{K} \times (-\overline{K})$ by the continuous map $(t,x,y) \mapsto (1-t)x + ty$. Finally, $\tilde{K} = \pi(K_1)$ is an absolutely convex  $\tilde{\tau}$-compact (a fortiori $\sigma(\tilde{\mathcal{X}},\mathcal{X}^*)$-compact) subset of $\tilde{\mathcal{X}}$. Considering
\[
V \defeq \enstq{\rho \in \mathcal{X}^*}{\forall \tilde{x} \in \tilde{K} \quad \abs{\rho(\tilde{x}) - \lambda(\tilde{x})} < \alpha} ,
\]
one has, for any $\rho \in V$ and $x \in K$, $\abs{\rho(x) - \lambda(x)} < \alpha$ (because $\pi(K) \subset \tilde{K}$), hence
\[
p(\rho) \geqslant \log \mathbb{E}\bigl(e^{\rho(X_1)} \mathbf{1}_K(X_1) \bigr) \geqslant \min(p(\lambda), 1/\alpha ) - 2 \alpha ,
\]
which yiels the $\tau(\mathcal{X}^*,\tilde{\mathcal{X}})$-lower semi-continuity of $p$.
\end{remark}

However, the convex-tightness of the distribution of $X_1$ is not a necessary condition for the equality $s=-p^*$ as shown in the following example.

\begin{example}
Let $\mathcal{X}$ be the l.c.s.s.\ $\R^\R$ endowed with the product topology $\tau$ and the cylinder $\sigma$-algebra $\mathcal{F}$. There is no convex-tight probability measure on $\mathcal{X}$. Indeed, every compact subset of $\mathcal{X}$ is weakly bounded, whereas no nonempty measurable subset is weakly bounded (any measurable subset being in the $\sigma$-algebra generated by countably many cylinder sets based on finitely many points): so, the only measurable relatively compact subset is the empty set. But, on $\mathcal{X}$, any Dirac mass and even any Gaussian measure has a continuous pressure, of the form $L(\lambda) + Q(\lambda)$ where $L$ is a linear functional on $\mathcal{X}^* = \R^{(\R)}$ (automatically $\sigma(\mathcal{X}^*, \mathcal{X})$-continuous) and $Q$ is a ($\sigma(\mathcal{X}^*, \mathcal{X})$-continuous) nonnegative symmetric bilinear form on $\mathcal{X}^*$ (see, \emph{e.g.}, \cite[Proposition 2.3.9]{Bogachev_1998_GaussianMeasures}).
\end{example}

\bibliographystyle{alpha}
\bibliography{../../../cramer2,../../dg_lin}

\end{document}